\newtheorem{theorem}{Theorem}[section]
\newtheorem{lemma}[theorem]{Lemma}
\newtheorem{proposition}[theorem]{Proposition}
\newtheorem*{remark}{Remark}
\theoremstyle{definition}
\newtheorem{definition}{Definition}
\numberwithin{equation}{section}
\newcommand*\grad{\nabla}
\newcommand*\R{\mathbb{R}}
\newcommand*\N{\mathbb{N}}
\newcommand*\rn{\mathbb{R}^n}
\newcommand*\rt{\mathbb{R}^2}
\newcommand*\omegal{\Omega_\ell}
\begin{document}

\title[Fractional Poincar\'e inequality on domains with finite ball condition]{On Fractional Poincar\'e inequality for unbounded domains  with finite ball conditions: Counter Example }

\maketitle
\centerline{\scshape Indranil Chowdhury$^1$,   Prosenjit Roy$^2$ }
\medskip
{\footnotesize
  \centerline{1. Norwegian University of Science and Technology, Norway.\ indranil.chowdhury@ntnu.no\footnote{\textit{Present Address:} PMF, University of Zagreb, Croatia. indranil.chowdhury@math.hr}}
 \centerline{2. Indian Institute of Technology,  Kanpur, India.\
 prosenjit@iitk.ac.in}
  }



\author{}
\address{}
\curraddr{}
\email{}
\thanks{}

\smallskip

\begin{abstract}  In this paper we investigate the fractional Poincar\'e inequality on unbounded domains. In the local case,  Sandeep-Mancini  [\textit{Moser-Trudinger inequality on conformal discs, Commun. Contemp. Math, 2010}] showed that in the class of simply connected domains, Poincar\'e inequality holds if and only if the domain  satisfies finite ball condition. We prove that such a result \textit{can not be true} in the `nonlocal/fractional' setting even if finite ball condition is replaced by a related stronger condition. We further provide some \textit{sufficient criterions} on domains for fractional Poincar\'e inequality to hold.  In the end, asymptotic behaviour of \textit{all eigenvalues} of fractional Dirichlet problems on long cylindrical domains is addressed.  
\end{abstract}

\maketitle



\author{}
\address{}
\curraddr{}
\email{}
\thanks{}

\smallskip

\keywords{\textit{Keywords:} \ Fractional Poincar\'e inequality, eigenvalue problem for PDEs,  infinite strips like domains, unbounded domains, fractional-Sobolev spaces,    fractional Laplacian,  asymptotic behaviour.}
\smallskip

\subjclass{\textit{Subject Classification:}\ {35A23; 35P15; 26D10; 35R09; 46E35; 35R11; 35P20.}}
\date{}



\smallskip

\section{Introduction}

In recent years there has been a renewed interest in the theory of fractional Sobolev spaces. A special interest is due to the fact that these spaces play a fundamental role in the study of partial differential equations with nonlocal effects which have a wide range of physical applications, see \cite{Bucur16} and references therein.  Given an open set $\Omega \in \rn$, let us first define the quantity
 \begin{align*}
    BC(\Omega) = \sup \{r\, : \, B_r(x) \subset \Omega, x\in \Omega \} .
 \end{align*}
 We say the domain $\Omega$ satisfies the `\textit{finite ball condition}' if $BC(\Omega)< \infty$.
 
 Let us define the space $H_{\Omega}^s(\rn)$ as the  closure  of $C_c^{\infty}(\Omega)$ functions(extended by zero to whole $\mathbb{R}^n$) with respect  to the norm  
\begin{align*}
 \|u\|_{H^s(\rn)}:=  \|u\|_{L^2(\Omega)}+\left( \int_{\rn}\int_{\rn }\frac{(u(x)-u(y))^2}{|x-y|^{n+2s}}dx dy \right)^{\frac{1}{2}},
\end{align*}
where,  $C_c^{\infty}(\Omega)$ denotes the space  of smooth functions with compact support in $\Omega$. These spaces can be  viewed naturally as the fractional counterpart of $H^1_0(\Omega)$, defined to be the closure of $C_c^{\infty}(\Omega)$  with respect to the Sobolev norm $\Big(\|u\|^2_{L^{2}(\Omega)}+ \int_{\Omega} |\grad u|^2\Big)^{\frac{1}{2}}$. Particularly, $H_{\Omega}^s(\rn)$ plays a pivotal role to study the Dirichlet problems involving fractional Laplace operator $(-\Delta)^{s}$. For domains with continuous boundary, $H_{\Omega}^s(\rn)$ can also be written in particular form (c.f. \cite[Theorem 6]{valdi}):
$$ H_{\Omega}^s(\rn) = \{u \in H^s(\rn) : u= 0  \, \mbox{ a.e. in} \  \Omega^c \}, $$
where $H^s(\rn) :=  \left\{ u: \mathbb{R}^n \rightarrow \mathbb{R},  \|u\|_{H^s(\rn)} <\infty \right\}.$ We refer to \cite{brasco,kass, maz, ds, sar} and references therein for more details in this context.

 By Poincar\'e inequality in local case, we mean that the quantity
$$ \lambda_1(\Omega) :=  \inf_{ \substack{ u\in C_{c}^\infty(\Omega) \\ u\neq 0}}\frac{\int_{\Omega}|\grad u|^2}{\int_\Omega u^2} > 0.$$
\noindent Similarly,  We say that  \textit{fractional Poincar\'e} (\textbf{FP($s$)}) inequality holds for $H_{\Omega}^s(\rn)$ if,
$$P_{n,s}^2(\Omega) :=  \inf_{\substack{ u\in H_{\Omega}^s(\rn) \\ u\neq 0}}\frac{\int_{\rn}\int_{\rn }\frac{(u(x)-u(y))^2}{|x-y|^{n+2s}}dx dy}{\int_\Omega u^2} > 0.$$
It is  worth noting that fractional Sobolev spaces have many properties which  are quite similar to the properties observed in classical Sobolev spaces as well.  
Very interestingly, many results depend on the range of the fractional power $s$. We refer to \cite{lk, dy, vaha2, rupert, loss, maz} and references therein for general discussions on fractional Sobolev and Hardy's inequalities.   \smallskip



  It is well-known that for  domains with finite Lebesgue measure (in particular for bounded domains), FP($s$) inequality holds  for all $s \in (0,1)$ (for reference, see \cite{ds}, it also follows from our Theorem \ref{thm:sufficient:cond}). Also  $P_{n,s}^2(\Omega)$  corresponds to the first eigenvalue of fractional Dirichlet problem on bounded domains $\Omega$, (see, \cite[Proposition 9]{sar_val}). 
  Note that, similar to local case, finite ball condition for domains ($BC(\Omega) < \infty$) is \emph{necessary} for  FP($s$) inequality to hold true. 
   When $\Omega$ is contained  in two parallel hyperplanes (strips),  then  $P_{n,s}^2(\Omega) >0$ for all $s\in (0,1)$ (see,  \cite{karen} and  \cite{pi}). Apart from the above mentioned  class of domains, to the best of our knowledge, there are no non-trivial unbounded domain for which existence of FP($s$) inequality is discussed in literature.  Our Theorem \ref{thm:sufficient:cond} and discussions in section \ref{sec:suff_cond} provides several examples of non-trivial domains for which FP($s$) inequality holds (or does not hold).
  On the other hand in \cite{type1}, the authors discussed about the `\textit{regional fractional Poincar\'e}' inequalities for unbounded domain where for any domain $\Omega$ the best constant  $P^1_{n,s}(\Omega)$ defined as $$P_{n,s}^1(\Omega) := \displaystyle \inf_{\substack{u\in C^{\infty}_c(\Omega) \\ u\neq 0}} \frac{\int_{\Omega}\int_{\Omega }\frac{(u(x)-u(y))^2}{|x-y|^{n+2s}}dx dy}{\int_\Omega u^2}.$$ It is immediate to note that $P_{n,s}^2(\Omega)\geq P_{n,s}^1(\Omega)$. Hence, if  $P_{n,s}^1(\Omega)>0$ one find that FP($s$) inequality holds as well. Although one can not guarantee the reverse. In fact, for any bounded Lipschitz domain $\Omega$ we have $P_{n,s}^1(\Omega)=0$ for $s \in (0,\frac{1}{2}]$ (see \cite{rupert}, also \cite[Proposition 2.3]{type1}). But as mentioned above, for any bounded domain we get $P_{n,s}^2(\Omega)>0$.
 \smallskip

  Interestingly, in local case there is a direct correspondence between Poincar\'e inequality and finite ball condition. We have the following result due to Mancini-Sandeep \cite{Sandeep-Man} in  dimension $n=2$. Any higher dimension version of this result is still unknown.
\begin{proposition}[\cite{Sandeep-Man}]\label{prop:S-M}
 Let $\Omega \subset \mathbb{R}^2$ be simply connected. Then   $$BC(\Omega) < \infty \
\Leftrightarrow  \ \lambda_1(\Omega) > 0.$$
 \end{proposition}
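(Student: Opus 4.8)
The plan is to prove the two implications separately. The forward implication $\lambda_1(\Omega)>0\Rightarrow BC(\Omega)<\infty$ is elementary and holds in every dimension, so it is really a remark; the reverse implication is the substantive two-dimensional statement, and I would obtain it via the Riemann mapping theorem, the Koebe distortion estimate, and the Poincar\'e inequality on the hyperbolic disc.

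\textbf{Easy direction.} Suppose $BC(\Omega)=\infty$, so there are balls $B_{r_k}(x_k)\subset\Omega$ with $r_k\to\infty$. Fix $0\neq\varphi\in C_c^\infty(B_1)$ and set $\varphi_k(x):=\varphi\big((x-x_k)/r_k\big)\in C_c^\infty(\Omega)$. In dimension $2$ scaling gives $\int_\Omega|\grad\varphi_k|^2=\int_{B_1}|\grad\varphi|^2$ while $\int_\Omega\varphi_k^2=r_k^2\int_{B_1}\varphi^2\to\infty$, so the Rayleigh quotient tends to $0$ and $\lambda_1(\Omega)=0$. The contrapositive is the claim; note simple connectivity is not needed here.

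\textbf{Hard direction.} Assume $R:=BC(\Omega)<\infty$; in particular $\Omega\neq\mathbb R^2$, so by the Riemann mapping theorem there is a conformal bijection $f\colon\mathbb D\to\Omega$ with $f'$ nowhere vanishing, $\mathbb D$ the open unit disc. Since $f$ is a holomorphic homeomorphism, $u\mapsto v:=u\circ f$ is a bijection of $C_c^\infty(\Omega)$ onto $C_c^\infty(\mathbb D)$ (the support of $v$ is $f^{-1}(\operatorname{supp}u)$, compact in $\mathbb D$), and by conformal invariance of the Dirichlet integral in $\mathbb R^2$ together with the change of variables $w=f(z)$, whose area Jacobian is $|f'(z)|^2$, one has $\int_\Omega|\grad u|^2=\int_{\mathbb D}|\grad v|^2$ and $\int_\Omega u^2=\int_{\mathbb D}v^2|f'|^2$. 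Hence
$$\lambda_1(\Omega)=\inf_{\substack{v\in C_c^\infty(\mathbb D)\\ v\neq0}}\frac{\int_{\mathbb D}|\grad v|^2}{\int_{\mathbb D}v^2|f'|^2}.$$
Next, applying the Koebe $\tfrac14$-theorem to the univalent maps $w\mapsto f\big(z+(1-|z|)w\big)-f(z)$ gives $\tfrac14(1-|z|)\,|f'(z)|\le\operatorname{dist}(f(z),\partial\Omega)\le R$ for all $z\in\mathbb D$; since $1-|z|^2\le 2(1-|z|)$ this yields the pointwise weight bound $|f'(z)|^2\le \dfrac{64R^2}{(1-|z|^2)^2}$. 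Combining this with the Hardy/Poincar\'e inequality on the hyperbolic disc,
$$\int_{\mathbb D}\frac{v^2}{(1-|z|^2)^2}\,dx\,dy\ \le\ \int_{\mathbb D}|\grad v|^2\,dx\,dy,\qquad v\in C_c^\infty(\mathbb D),$$
which is exactly the statement that the bottom of the spectrum of the hyperbolic Laplacian on $\mathbb D$ equals $\tfrac14$, we get $\int_{\mathbb D}v^2|f'|^2\le 64R^2\int_{\mathbb D}|\grad v|^2$, so $\lambda_1(\Omega)\ge (64\,BC(\Omega)^2)^{-1}>0$.

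\textbf{Main obstacle.} The whole difficulty is concentrated in the reverse implication, and specifically in its two genuinely two-dimensional inputs: the conformal invariance of $\int|\grad\cdot|^2$ (which fails in dimension $\neq2$, explaining why no higher-dimensional analogue is known), and the Koebe dictionary translating $BC(\Omega)<\infty$ into $|f'(z)|\lesssim(1-|z|^2)^{-1}$. The hyperbolic Hardy inequality is classical but I would include a short self-contained proof, e.g.\ by expanding $v$ in a Fourier series in the angular variable and reducing to a one-dimensional weighted Hardy inequality in the radial variable. The only point of care in the change-of-variables step is that compact support is preserved under both $f$ and $f^{-1}$, which holds because $f$ is a homeomorphism of $\mathbb D$ onto $\Omega$.
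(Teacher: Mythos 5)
The paper does not prove Proposition~\ref{prop:S-M}; it is quoted without proof from Mancini--Sandeep \cite{Sandeep-Man}. Your argument is correct and is, in substance, the proof given there: the scaling argument for the trivial direction (the local analogue of Lemma~\ref{bfc}), and for the substantive direction the Riemann mapping theorem, conformal invariance of the Dirichlet energy in two dimensions, the Koebe one-quarter theorem to convert $BC(\Omega)\le R$ into the weight bound $|f'(z)|\le 8R/(1-|z|^2)$, and the hyperbolic Hardy inequality $\int_{\mathbb D} v^2(1-|z|^2)^{-2}\le\int_{\mathbb D}|\grad v|^2$.
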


 On the other hand, for nonlocal case, one can verify that the simply connected domain
$$\Omega =   \mathbb{R}^2\setminus \left(\mathbb{Z}\times \{(-\infty, 0] \cup [1, \infty)\} \right)$$
satisfies  finite ball condition (i.e. $BC(\Omega)<\infty$), and $P^2_{2,s}(\Omega) = P^2_{2,s}(\R^2)=0$ for $s \in (0,\frac{1}{2})$ (see, e.g. Lemma \ref{lem:extended-nec-cnd}).   The example reflects that Gagliardo seminorm in fractional case $s\in (0,\frac{1}{2})$ does not 'count' the low dimensional parts of the complement of $\Omega$. Hence, we modify the definition of $BC(\Omega)$ accordingly and define the 
following:   
\begin{align*}
    \overline{BC}(\Omega) = \sup \{r\, : \, |B_r(x)\cap \Omega^c|=0 , \, x\in \Omega \} .
 \end{align*}
\begin{definition}[\emph{Extended finite ball condition}]
We say the domain $\Omega \subset \rn$ satisfies \textit{extended finite ball condition} if $ \overline{BC}(\Omega)< \infty$. 
\end{definition}
In Lemma \ref{lem:extended-nec-cnd}, we show that $\overline{BC}(\Omega) < \infty$ is again a necessary condition on any domain for FP(s) inequality to hold  for $s\in (0, \frac{1}{2})$.  Our main aim is to show that the nonlocal analog of Proposition \ref{prop:S-M} can non hold even with the extended finite ball condition. Note that, pathological examples like above (full space like domains) are already ruled out with this new condition.


\begin{theorem}[\emph{Counter Example}] \label{thm:main}
Let $s\in (0,\frac{1}{2})$. There exists a simply connected domain  $\mathcal{D} \subset \mathbb{R}^2$ satisfying  extended finite ball condition ($\overline{BC}(\Omega) < \infty$)  for which  FP($s$) inequality  does not hold, i.e. $P_{2,s}^2(\mathcal{D})=0.$
\end{theorem}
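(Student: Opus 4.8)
\medskip
\noindent\emph{Sketch of the argument.} The plan is to take $\mathcal{D}=\mathbb{R}^2\setminus(F\cup D)$, where
\[
F=\mathbb{Z}\times\big((-\infty,0]\cup[1,\infty)\big),\qquad
D=\bigcup_{p\in\mathbb{Z}^2}\overline{B_{\varepsilon(p)}(p)},\qquad
\varepsilon(p)=\tfrac1{100}(1+|p|)^{-2}.
\]
Here $F$ is the one-dimensional ``comb'' of vertical rays already appearing in the excerpt, and $D$ is a family of tiny disks, one centred at each lattice point, whose radii \emph{shrink to zero} at infinity. The rationale: $\overline{BC}(\mathcal{D})<\infty$ forces $\mathcal{D}^c$ to carry positive-measure obstacles at bounded density (so purely one-dimensional obstacles will not do), yet if those obstacles did not shrink then $\mathcal{D}$ would be uniformly perforated and \textbf{FP($s$)} would hold, so collapsing the Poincar\'e quotient requires the obstacles to shrink quickly; the rays of $F$ serve both to attach the disks to infinity — keeping $\mathcal{D}$ simply connected — and, being one-dimensional and hence (for $s<\tfrac12$) invisible to the Gagliardo energy in the spirit of Lemma~\ref{lem:extended-nec-cnd}, to cost essentially nothing.

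The two easy properties come first. Each $p\in\mathbb{Z}^2$ lies on $F$, so every disk of $D$ is attached to a ray of $F$; hence each connected component of $F\cup D$ is unbounded, $\mathbb{R}^2\setminus\mathcal{D}$ has no bounded component, and $\mathcal{D}$ is simply connected — one checks along the way that $\mathcal{D}$ is open and connected (the strip $\{0<y<1\}$ joins the channels $\{k<x<k+1\}$, and the disks, having radius $<\tfrac1{100}$, sever no passage). For the extended ball condition, if $x\in\mathcal{D}$ then the nearest lattice point $p$ has $|x-p|\le\tfrac{\sqrt2}{2}$, so any ball $B_r(x)$ with $r>\tfrac{\sqrt2}{2}$ engulfs a small ball about $p$, a positive-measure subset of $D\subset\mathcal{D}^c$; thus $\overline{BC}(\mathcal{D})\le\tfrac{\sqrt2}{2}<\infty$.

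The core is to show $P_{2,s}^2(\mathcal{D})=0$. Writing $[w]_s^2=\iint_{\mathbb{R}^2\times\mathbb{R}^2}(w(x)-w(y))^2|x-y|^{-2-2s}\,dx\,dy$, I would test the Rayleigh quotient on $u_\ell=v_\ell\,\psi^F\psi^D$, where $v_\ell(x)=\phi(x/\ell)$ with $\phi\in C_c^\infty(\mathbb{R}^2)$, $0\le\phi\le1$, $\phi\equiv1$ on $B_1$ — so $\|v_\ell\|_{L^2}=c_0\ell$ and $[v_\ell]_s^2=\ell^{2-2s}[\phi]_s^2$ by scaling; $\psi^D=1-\sum_{p\in\mathbb{Z}^2}\eta_p$, with $\eta_p$ Lipschitz, supported in $B_{1/2}(p)$, equal to $1$ near $\overline{B_{\varepsilon(p)}(p)}$, built by rescaling the fixed profile $z\mapsto\min(1,|z|^{-2})$; and $\psi^F$ a fixed cut-off equal to $1$ off an arbitrarily thin tube around $F$ and to $0$ near $F$. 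Two inputs drive the estimate. First, the fractional capacity bound $[\eta_p]_s^2+\|\eta_p\|_{L^2}^2\lesssim\varepsilon(p)^{2-2s}$ (valid since $n=2>2s$), together with the pairwise disjointness of the interiors of the balls $B_{1/2}(p)$ — which renders the off-diagonal terms of $[\sum_p\eta_p]_s^2$ nonpositive — yields
\[
[\psi^D]_s^2+\|1-\psi^D\|_{L^2}^2\le\sum_{p\in\mathbb{Z}^2}\big([\eta_p]_s^2+\|\eta_p\|_{L^2}^2\big)\lesssim\sum_{p\in\mathbb{Z}^2}(1+|p|)^{-2(2-2s)}<\infty ,
\]
the series converging exactly because $2(2-2s)>2$ when $s<\tfrac12$. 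Second, since $s<\tfrac12$ one-dimensional sets in $\mathbb{R}^2$ have zero $H^s$-capacity, so $\psi^F$ may be taken with $[\psi^F]_s^2+\|1-\psi^F\|_{L^2}^2\le1$. Then $u_\ell$ is Lipschitz with compact support inside $\mathcal{D}$, hence $u_\ell\in H_{\mathcal{D}}^s(\mathbb{R}^2)$, and $u_\ell\not\equiv0$ (e.g.\ $u_\ell(\tfrac12,\tfrac12)=1$). Applying $[fg]_s^2\le2\|f\|_\infty^2[g]_s^2+2\|g\|_\infty^2[f]_s^2$ twice, $[u_\ell]_s^2\le2\ell^{2-2s}[\phi]_s^2+C$ with $C$ independent of $\ell$, whereas $\|u_\ell\|_{L^2}\ge\|v_\ell\|_{L^2}-\|1-\psi^F\psi^D\|_{L^2}\ge c_0\ell-C'$, so
\[
P_{2,s}^2(\mathcal{D})\le\frac{[u_\ell]_s^2}{\|u_\ell\|_{L^2}^2}\le\frac{2\ell^{2-2s}[\phi]_s^2+C}{(c_0\ell-C')^2}\xrightarrow{\ell\to\infty}0 .
\]

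The step I expect to be the real obstacle is the \emph{design} of $\mathcal{D}$: reconciling simple connectedness, $\overline{BC}<\infty$, and the presence of arbitrarily large ``almost free'' regions for test functions pulls in opposite directions, and the resolution — positive-measure obstacles that nevertheless shrink at a prescribed rate, hung on a one-dimensional skeleton running to infinity — is the creative core. Intertwined with it is the capacity estimate $[\eta_p]_s^2+\|\eta_p\|_{L^2}^2\lesssim\varepsilon(p)^{2-2s}$, since it is precisely the ensuing summability $\sum_p\varepsilon(p)^{2-2s}<\infty$ — available with this decay only in the range $s<\tfrac12$ — that makes the total cost of annihilating \emph{all} the disks a fixed finite quantity, negligible against $\|u_\ell\|_{L^2}^2\sim\ell^2$. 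Everything else (the sphere criterion for simple connectedness, the Leibniz inequality for the Gagliardo seminorm, nullity of the $H^s$-capacity of curves for $s<\tfrac12$, convergence of mollifications) is routine and would be invoked or checked directly.
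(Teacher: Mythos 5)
Your construction and argument are genuinely different from the paper's, though they rest on the same underlying numerology. The paper builds $\mathcal{D}$ as a union of unit-width vertical strips separated by gap strips $\mathcal{S}_m$ whose widths $s_m$ are chosen so that $\sum_m s_m^{1-2s}<\infty$, together with one fixed horizontal strip $(-\infty,\infty)\times(-2,-1)$ that joins all the vertical strips and enforces simple connectedness; the test functions are sharp indicator functions of growing unions of rectangles inside the domain, and the Gagliardo seminorm is estimated directly via the exact integral identities in Lemmas \ref{lem:dblint:id} and \ref{maineq}. In your construction the obstacle is a lattice of shrinking disks of radii $\varepsilon(p)=\tfrac1{100}(1+|p|)^{-2}$ hung on the comb $F$, and the test functions are smooth bump functions $v_\ell$ multiplied by capacity-type cut-offs $\psi^D,\psi^F$ that annihilate the obstacle. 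In both cases $s<\tfrac12$ enters as the summability threshold of a capacity-like series: $\sum_m s_m^{1-2s}<\infty$ in the paper, $\sum_p\varepsilon(p)^{2-2s}<\infty$ for you. Your use of the elementary superadditivity $(g(x)-g(y))^2\le\sum_p(\eta_p(x)-\eta_p(y))^2$ for nonnegative $\eta_p$ with disjoint supports is correct and is a clean substitute for the paper's hands-on integral estimates.

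There is, however, a genuine gap at the comb cut-off $\psi^F$. You claim that, because one-dimensional sets in $\mathbb{R}^2$ have zero $H^s$-capacity when $s<\tfrac12$, one may take a fixed $\psi^F$ vanishing near $F$, equal to $1$ off an arbitrarily thin tube around $F$, with $[\psi^F]_s^2+\|1-\psi^F\|_{L^2}^2\le1$. The zero-capacity statement is a local statement about bounded one-dimensional sets; $F$ has infinite total length. If $\psi^F$ transitions across a tube of uniform half-width $\delta$, then inside a ball $B_L$ the tube has total length of order $L^2$, and the seminorm cost of such a transition scales like (length)$\times\delta^{1-2s}\sim L^2\delta^{1-2s}\to\infty$ as $L\to\infty$, for every fixed $\delta>0$. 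So a uniform thin tube gives $[\psi^F]_s^2=\infty$, and the asserted bound is false as stated. To make $[\psi^F]_s^2$ finite you must let the tube half-widths $\delta_{k}(y)$ decay both in $|k|$ and in $|y|$, fast enough that $\sum_k\int\delta_k(y)^{1-2s}\,dy<\infty$; this is an estimate of exactly the same nature as the one you carry out for $\psi^D$, and it needs to be carried out, not invoked. (It \emph{is} fixable with, say, $\delta_k(y)\sim(1+|k|)^{-\alpha}(1+|y|)^{-\alpha}$ and $\alpha>1/(1-2s)$, but the sketch does not notice that a nontrivial choice is required.) A secondary, minor point: the model profile $z\mapsto\min(1,|z|^{-2})$ is not compactly supported, so $\eta_p$ must be truncated to $B_{1/2}(p)$ before the disjoint-support argument applies; this is routine but should be said.
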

Infinite strips are important class of unbounded domains that has finite ball condition. One can verify that (cf. Theorem \ref{thm:sufficient:cond}), if the domain is a finite union of strips then FP$(s)$ is true for all $s$. Our construction of the domain $\mathcal{D}$, in the above theorem, uses  union of infinite number of parallel strips, separated by a distance that goes to $0$ at a special rate, and another perpendicular strip joining them to keep the simply connected assumption. Theorem \ref{thm:main} assures that in the class of simply connected domains,  extended finite ball condition is not sufficient to ensure the FP($s$) inequality in the  full range of $s\in (0, \frac{1}{2})$.  In the range $s \in [\frac{1}{2}, 1)$ such a result is not available yet.
\begin{remark}
Interestingly, in Section \ref{sec:suff_cond} (as an application of Theorem \ref{thm:sufficient:cond}) we will show that for  $\mathcal{D}$, $P_{2,s}^2(\mathcal{D}) > 0$ in the regime $s \in (\frac{1}{2}, 1)$. What happens for $s= \frac{1}{2}$ is unclear to us.
\end{remark}

  Our next  result provides two sufficient criterion for  FP($s$) inequality to hold true.  We start with some definitions that  are required to formulate our next theorem.

\begin{definition}[\emph{Uniform FP($s$) Inequality}]
Let $\{\Omega_\alpha\}_{\alpha}$ be a family of sets in $\mathbb{R}^n$, where $\alpha \in \mathbb{A}$ (some indexing set). We say FP($s$)  inequality  to hold uniformly  for  $\{\Omega_\alpha\}_{\alpha}$, if  
$\displaystyle\inf_\alpha P_{n,s}^2(\Omega_\alpha) > 0.$
\end{definition}
\noindent For any $\omega\in S^{n-1}$ and $x_0 \in \rn$ we define $L_{\Omega}(x_0, \omega) := \left\{t \ |  \ x_0+t\omega \in \Omega \right\}\subset \mathbb{R}$. Here $S^{n-1}$ denotes the  unit sphere in $\mathbb{R}^n$.
\begin{definition}[\emph{LS($s$) type Domain}]\label{defn:LS}  We say $\Omega$ is of type LS($s$), if there exists a set $ \sigma \subset S^{n-1}$, of positive $n-1$ dimensional Hausdorff measure, such that one dimensional FP($s$) inequality   holds uniformly for  the  family of sets $\left\{L_\Omega(x_0,\omega) \right\}_{x_0\in \rn, \omega \in \sigma}$. \vspace{0.1cm}
\end{definition}

Note that, it is sufficient to consider the family of sets for $x_0 \in P(\omega)$ where $P(\omega)$ denotes the plane perpendicular to $\omega \in S^{n-1}$,  passing through the origin. The definition of LS(s) type domain is technical. To provide some geometric intuitions, we present several examples of LS(s) type domain in Section \ref{sec:suff_cond}.

\begin{theorem}[\emph{Sufficient Criterion}]\label{thm:sufficient:cond}
Let $\Omega \subset \rn$ be a domain and $s \in (0,1)$. Then $P_{n,s}^2(\Omega) > 0$ if $\Omega$ satisfies one of the following criterion:
\begin{itemize}
    \item[(i)] There exist  $R>0$ and $c>0$ such that $|\Omega^c \cap B(x,R)|>c$ for each $x \in \Omega$.
    \item[(ii)] $\Omega$ is a \textrm{LS(s) type domain}.
\end{itemize}
\end{theorem}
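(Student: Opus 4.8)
The plan is to prove the two parts separately, with part (i) serving as a warm-up that already illustrates the basic mechanism, and part (ii) requiring a slicing/Fubini argument that reduces the $n$-dimensional Gagliardo seminorm to a lower bound coming from one-dimensional fractional Poincaré inequalities.

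For part (i), I would proceed by contradiction or, more constructively, directly estimate $\int_\Omega u^2$ in terms of the full Gagliardo seminorm. Fix $u \in H^s_\Omega(\rn)$, extended by zero to $\rn$. For $x \in \Omega$, since $|\Omega^c \cap B(x,R)| > c$, on a set of measure at least $c$ inside $B(x,R)$ we have $u(y)=0$, so $|u(x)|^2 = |u(x)-u(y)|^2$ there. Averaging over that set and using $|x-y|^{n+2s} \le (2R)^{n+2s}$ on $B(x,R)$ gives a pointwise bound
\[
|u(x)|^2 \le \frac{(2R)^{n+2s}}{c} \int_{B(x,R)} \frac{(u(x)-u(y))^2}{|x-y|^{n+2s}}\, dy .
\]
Integrating in $x$ over $\Omega$ and enlarging the domain of the inner integral to $\rn$ yields $\int_\Omega u^2 \le C(n,s,R,c) \iint_{\rn\times\rn} \frac{(u(x)-u(y))^2}{|x-y|^{n+2s}}\, dx\, dy$, which is exactly $P^2_{n,s}(\Omega) \ge 1/C > 0$. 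This also covers the bounded/finite-measure case mentioned after the definition of $P^2_{n,s}$, since then $\Omega^c$ has positive measure in a fixed large ball around every point.

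For part (ii), the idea is to use the one-dimensional inequalities along the directions in $\sigma$. Let $m := \inf_{x_0, \omega\in\sigma} P^2_{1,s}(L_\Omega(x_0,\omega)) > 0$ be the uniform one-dimensional constant. Fix $\omega \in \sigma$ and write points of $\rn$ as $x = x_0 + t\omega$ with $x_0 \in P(\omega)$, $t\in\R$. Restricting the double integral to pairs $x,y$ on the same line $\{x_0 + t\omega\}$ does not directly make sense (it is a null set), so instead I would integrate the one-dimensional inequality: for a.e. $x_0 \in P(\omega)$ the slice $u(x_0 + \cdot\,\omega)$ lies in $H^s_{L_\Omega(x_0,\omega)}(\R)$, hence
\[
\int_{\R} u(x_0+t\omega)^2\, dt \le \frac{1}{m} \int_{\R}\int_{\R} \frac{(u(x_0+t\omega) - u(x_0+\tau\omega))^2}{|t-\tau|^{1+2s}}\, dt\, d\tau .
\]
Integrating over $x_0 \in P(\omega)$ gives $\int_\Omega u^2 \le \frac{1}{m} I(\omega)$, where $I(\omega)$ is a "directional" fractional energy. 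The remaining step is to average $I(\omega)$ over $\omega \in \sigma$ and compare with the full Gagliardo seminorm; this is where the change of variables from "base point plus direction plus two line-parameters" to "two points in $\rn$" is carried out, and one must check the Jacobian produces exactly the kernel $|x-y|^{-(n+2s)}$ (up to a constant depending on $n$ and $s$) and that restricting $\omega$ to $\sigma$, with $\mathcal H^{n-1}(\sigma)>0$, only loses a finite factor. Concretely, writing $x = x_0 + t\omega$, $y = x_0 + \tau\omega$ with $\omega = (x-y)/|x-y|$ and using polar-type coordinates, $dx_0\, dt\, d\tau\, d\omega$ corresponds to $|x-y|^{-(n-1)}\, dx\, dy\, (\text{angular measure})$ after accounting for $t - \tau = \pm|x-y|$; combined with the $|t-\tau|^{-(1+2s)}$ in the kernel this reproduces $|x-y|^{-(n+2s)}$. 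Integrating the slice inequality against $d\mathcal H^{n-1}(\omega)$ over $\sigma$ therefore gives
\[
\mathcal H^{n-1}(\sigma) \int_\Omega u^2 \le \frac{C(n,s)}{m} \int_{\rn}\int_{\rn} \frac{(u(x)-u(y))^2}{|x-y|^{n+2s}}\, dx\, dy ,
\]
so $P^2_{n,s}(\Omega) \ge \mathcal H^{n-1}(\sigma)\, m / C(n,s) > 0$.

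The main obstacle is the bookkeeping in the coordinate change of part (ii): making precise the disintegration of Lebesgue measure on $\rn\times\rn$ into "line through $x$ and $y$" plus "position along the line", ensuring measurability of the slices $u(x_0+\cdot\,\omega)$ for a.e. $x_0$ (which requires first proving the inequality for $u \in C_c^\infty(\Omega)$ and then passing to the limit in $H^s_\Omega(\rn)$), and verifying that the density-in-$C_c^\infty$ definition of $H^s_{L_\Omega(x_0,\omega)}(\R)$ is compatible with restriction of smooth compactly supported functions on $\Omega$. A secondary technical point is that $\sigma$ need only have positive measure, not full measure, so one cannot integrate over all of $S^{n-1}$; but since the integrand in the coordinate change is nonnegative, discarding directions outside $\sigma$ only decreases the right-hand side, which is harmless, while on the left it costs the finite factor $\mathcal H^{n-1}(\sigma)$.
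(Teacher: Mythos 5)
Your proposal is correct and follows essentially the same route as the paper. For part (i) your pointwise estimate
\[
|u(x)|^2 \le \frac{C(R,c)}{\,}\int_{B(x,R)}\frac{(u(x)-u(y))^2}{|x-y|^{n+2s}}\,dy \qquad (x\in\Omega)
\]
followed by integration in $x$ is exactly the paper's computation, just phrased at the pointwise level first (the paper restricts the inner $y$-integral to $\Omega^c\cap B(x,R)$, uses $u(y)=0$ there and the lower kernel bound, and integrates). For part (ii), the disintegration you sketch — $dx\,dy$ into base point $x_0\in P(\omega)$, direction $\omega\in S^{n-1}$, and two line parameters $t,\tau$, with the Jacobian producing the one-dimensional kernel $|t-\tau|^{-(1+2s)}$ — is precisely the Loss--Sloane formula that the paper cites as Lemma~\ref{loss} rather than re-deriving. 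The rest is identical: apply the uniform one-dimensional constant $m$ from the LS($s$) hypothesis on each slice, integrate over $x_0\in P(\omega)$ and $\omega\in\sigma$, and recover $\int_\Omega u^2$ with a constant $m\,\mathcal H^{n-1}(\sigma)/C(n,s)$. The one place you should tighten is the statement that you "derive" the coordinate change; either invoke Loss--Sloane directly or carry out the polar substitution $y=x+r\omega$ followed by $x=x_0+t\omega$ carefully, as the factor $\tfrac12$ and the measurability of the slices $u(x_0+\cdot\,\omega)$ (first for $u\in C_c^\infty(\Omega)$, then by density) need to be accounted for — which you yourself flag as the main obstacle.
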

\smallskip
\noindent We believe that condition (i) of Theorem \ref{thm:sufficient:cond} is known to the experts, although we provide the proof for completeness. The  main tool  to  prove part (ii) of Theorem \ref{thm:sufficient:cond} is the clever use of change  of variable  type formula due to Loss-Sloane \cite{loss} which  effectively  reduces  the  problem in to one dimension setting. In Section \ref{sec:suff_cond} we present several  non-trivial examples of domains to discuss the sufficient conditions (Theorem \ref{thm:sufficient:cond}) in details.  

 
 \smallskip
Our next aim is to analyze the asymptotic behaviour of eigenvalues for fractional Laplacian on the class of domain of type $\omegal := (-\ell,\ell)^{m}\times \omega$, where  $\omega \subset \mathbb{R}^{n-m}$ be bounded open set and $n>m\in \mathbb{N}$. In this context, let us consider the following eigenvalue problem:
\begin{equation}\label{frac eigen value}
    \begin{cases}
       (-\Delta)^su_\ell=\lambda(\omegal)\;u_\ell\;\;\text{ in }\Omega_\ell,\\
       u_\ell=0\;\;\;\;\text{ in }\Omega_\ell^c=\rn\setminus\Omega_\ell.
    \end{cases}
\end{equation}
For detail discussion on the spectrum of fractional eigenvalue problem,  we refer to \cite{fr,sar_val}. We establish the following theorem regarding the asymptotic behaviour of the $k$-th eigenvalue of the above problem as $\ell \rightarrow \infty,$
\begin{theorem}[\emph{{Asymptotics of the k-th Eigenvalue}}]\label{eigenvalue conv}
It holds that for $0<s<1$ and $k\in\N$
$$P^2_{n-m,s}(\omega)\leq\lambda_{k}(\omegal)\leq P^2_{n-m,s}(\omega)+A\ell^{-s}$$
where $A$ is a constant independent of $\ell$ and $\lambda_k(\Omega_\ell)$ denotes the $k$-th eigenvalue of \eqref{frac eigen value}.
\end{theorem}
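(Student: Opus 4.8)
The plan is to prove the two inequalities by independent arguments. Since the eigenvalues are ordered, for the lower bound it suffices to show $\lambda_1(\omegal)\ge P^2_{n-m,s}(\omega)$, and this will come from a ``marginal slicing'' estimate bounding the $n$-dimensional Gagliardo seminorm below by the $(n-m)$-dimensional one. For the upper bound I use the Courant--Fischer min-max characterisation of $\lambda_k(\omegal)$ — legitimate since $\omegal$ is bounded, so $(-\Delta)^s$ there has compact resolvent and discrete spectrum — evaluated on an explicit $k$-dimensional trial space obtained by tensoring the ground state on $\omega$ with the span of the first $k$ Dirichlet eigenfunctions of $(-\Delta)^s$ on the cube $(-\ell,\ell)^m\subset\R^m$. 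Throughout write $x=(x',x'')\in\R^m\times\R^{n-m}$, abbreviate the Gagliardo seminorm of $w$ on $\R^k$ by $[w]^2_{s,k}$, and record the two elementary convolution identities $\int_{\R^m}(|z|^2+r^2)^{-(n+2s)/2}\,dz=\kappa\,r^{-(n-m)-2s}$ and $\int_{\R^{n-m}}(\rho^2+|w|^2)^{-(n+2s)/2}\,dw=\kappa'\rho^{-m-2s}$, with $\kappa,\kappa'<\infty$ (both converge since $n+2s>m$). The constant $\kappa$ is precisely the ratio of the normalising constants of $(-\Delta)^s$ in dimensions $n$ and $n-m$, so under the normalisation used in \eqref{frac eigen value} it cancels and drops out of the final statement; below I carry it explicitly.

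\emph{Lower bound.} For $u\in C_c^\infty(\omegal)$ set $v(x''):=\|u(\cdot,x'')\|_{L^2(\R^m)}$; then $\operatorname{supp}v$ is a compact subset of $\omega$ and $\|v\|^2_{L^2(\R^{n-m})}=\|u\|^2_{L^2(\R^n)}$. Writing $[u]^2_{s,n}$ as an iterated integral over $(x'',y'')$ and then $(x',y')$, fix $x'',y''$ with $r=|x''-y''|$, put $f=u(\cdot,x'')$, $g=u(\cdot,y'')$, $K_r(z)=(|z|^2+r^2)^{-(n+2s)/2}$; integrating out $y'$ with the first convolution identity and bounding the cross term by $\|K_r*g\|_{L^2}\le\|K_r\|_{L^1}\|g\|_{L^2}$ (Young) together with Cauchy--Schwarz gives, for the inner integral, the lower bound $\kappa\,r^{-(n-m)-2s}(v(x'')-v(y''))^2$, hence $[u]^2_{s,n}\ge\kappa\,[v]^2_{s,n-m}$. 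Since $v\in H^s(\R^{n-m})$ with $\operatorname{supp}v$ compact in $\omega$, mollification places it in $H^s_\omega(\R^{n-m})$, so $[v]^2_{s,n-m}\ge P^2_{n-m,s}(\omega)\|v\|^2_{L^2}=P^2_{n-m,s}(\omega)\|u\|^2_{L^2}$; by density of $C_c^\infty(\omegal)$ in $H^s_{\omegal}(\R^n)$ we conclude $P^2_{n,s}(\omegal)\ge\kappa\,P^2_{n-m,s}(\omega)$, i.e.\ $\lambda_1(\omegal)\ge P^2_{n-m,s}(\omega)$ and a fortiori $\lambda_k(\omegal)\ge P^2_{n-m,s}(\omega)$.

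\emph{Upper bound.} Let $v_\star\in H^s_\omega(\R^{n-m})$ realise $P^2_{n-m,s}(\omega)$ (it exists by compact embedding), let $\mu_k(\ell)$ be the $k$-th Dirichlet eigenvalue of $(-\Delta)^s$ on $(-\ell,\ell)^m$, so $\mu_k(\ell)=\ell^{-2s}\mu_k(1)$ by scaling, and let $W_k$ be the span of the first $k$ corresponding eigenfunctions, so $[\Phi]^2_{s,m}\le\mu_k(\ell)\|\Phi\|^2_{L^2}$ for every $\Phi\in W_k$. Put $V:=\{\Phi\otimes v_\star:\Phi\in W_k\}$; it is $k$-dimensional ($\Phi\mapsto\Phi\otimes v_\star$ is injective) and lies in $H^s_{\omegal}(\R^n)$ (it is spanned by $H^s(\R^n)$-limits of tensor products of $C_c^\infty$-approximants, which belong to $C_c^\infty(\omegal)$). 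For $u=\Phi\otimes v_\star$ write $u(x)-u(y)=\Phi(x')(v_\star(x'')-v_\star(y''))+v_\star(y'')(\Phi(x')-\Phi(y'))$, square, apply Cauchy--Schwarz to the cross term against $|x-y|^{-n-2s}\,dx\,dy$ and integrate out $y'$ (resp.\ $x''$) in the two resulting terms via the convolution identities; this gives, uniformly for $u\in V\setminus\{0\}$,
\[
\frac{[u]^2_{s,n}}{\|u\|^2_{L^2}}\ \le\ \Big(\sqrt{\kappa\,P^2_{n-m,s}(\omega)}+\sqrt{\kappa'\,[\Phi]^2_{s,m}/\|\Phi\|^2_{L^2}}\,\Big)^{2}\ \le\ \Big(\sqrt{\kappa\,P^2_{n-m,s}(\omega)}+\sqrt{\kappa'\,\ell^{-2s}\mu_k(1)}\,\Big)^{2}.
\]
Expanding and using $\ell^{-2s}\le\ell^{-s}$ for $\ell\ge1$, the right side is $\le\kappa\,P^2_{n-m,s}(\omega)+A\ell^{-s}$ with $A=A(n,m,s,\omega,k)$ independent of $\ell$. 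By Courant--Fischer, $\lambda_k(\omegal)\le\kappa\,P^2_{n-m,s}(\omega)+A\ell^{-s}$; together with the lower bound (and $\kappa=1$ in the normalisation of \eqref{frac eigen value}) this is the assertion.

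\emph{Main obstacle.} The delicate part is the upper bound. One has to realise that the trial space must be a \emph{tensor product} of the $\omega$-ground state with the span of the first $k$ cube-eigenfunctions — a single test function cannot reach $\lambda_k$ — and then control the interaction term in $[\Phi\otimes v_\star]^2_{s,n}$. The clean separation $[\Phi\otimes v_\star]^2_{s,n}=\kappa\|\Phi\|^2_{L^2}[v_\star]^2_{s,n-m}+\kappa'\|v_\star\|^2_{L^2}[\Phi]^2_{s,m}$ is \emph{not} an identity (the cross term need not vanish), so one is forced through the estimate $[u]^2_{s,n}\le(\sqrt{T_1}+\sqrt{T_2})^2$, and it is precisely this square-root structure that converts the natural $O(\ell^{-2s})$ correction (the size of $\mu_k(\ell)$) into the stated $O(\ell^{-s})$ rate. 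A secondary technical point, in the lower bound, is that the marginal $v$ genuinely lies in $H^s_\omega(\R^{n-m})$; this is ensured by the compactness of $\operatorname{supp}u$ inside the open set $\omegal$, which forces $\operatorname{supp}v$ to be a compact subset of $\omega$.
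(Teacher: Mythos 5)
Your proof is correct and takes a genuinely different route from the paper's. The paper proves the upper bound by splitting $\omegal$ in the $x_1$-direction into $2k-1$ subdomains, taking the first Dirichlet eigenfunctions $v_{i,\ell}$ on the disjoint odd-numbered pieces, forming a linear combination $\sum c_i v_{i,\ell}$ that is orthogonal to the first $k-1$ eigenfunctions of $\omegal$, and estimating its Rayleigh quotient via the characterisation \eqref{hev}; the cross-interaction of the widely separated supports decays like $\ell^{-(n-m+2s)}$, and the dominant $\ell^{-s}$ rate is inherited from the $k=1$ bound of \cite{type1}. You instead apply Courant--Fischer on the explicit $k$-dimensional trial space $\{\Phi\otimes v_\star:\Phi\in W_k\}$, with $W_k$ the span of the first $k$ cube eigenfunctions on $(-\ell,\ell)^m$ and $v_\star$ the minimiser on $\omega$; this sidesteps the orthogonality bookkeeping and the domain splitting entirely, and the $\ell^{-s}$ rate emerges directly from the Cauchy--Schwarz control of the cross term in $[\Phi\otimes v_\star]^2_{s,n}$, even though $\mu_k(\ell)=O(\ell^{-2s})$. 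Your lower bound via the marginal $v(x'')=\|u(\cdot,x'')\|_{L^2(\mathbb{R}^m)}$ is likewise a self-contained replacement for the paper's combination of domain monotonicity with the cited identity $P^2_{n-m,s}(\omega)=P^2_{n,s}(\mathbb{R}^m\times\omega)$. What your approach buys is a unified, reference-free argument treating all $k$ simultaneously; what the paper's buys is that it works verbatim with \eqref{hev} and never needs the tensor-product or convolution-identity machinery.

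One point you should settle more carefully: with the paper's unnormalised Gagliardo seminorm (which appears both in \eqref{hev} and in the definition of $P^2_{n,s}$), the constants $\kappa=\int_{\mathbb{R}^m}(1+|w|^2)^{-(n+2s)/2}\,dw$ and $\kappa'$ do \emph{not} simply drop out, and indeed $\kappa\neq1$ generically (for $n=2$, $m=1$, $s=\tfrac12$ one finds $\kappa=2$). Your slicing in fact yields $P^2_{n,s}(\mathbb{R}^m\times\omega)=\kappa\,P^2_{n-m,s}(\omega)$, which is at odds with the unadorned identity the paper quotes from \cite{type1}; presumably \cite{type1} works with a normalised seminorm, or the constant is suppressed. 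This does not affect the structure of your argument --- your two-sided bound
$\kappa P^2_{n-m,s}(\omega)\le\lambda_k(\omegal)\le\kappa P^2_{n-m,s}(\omega)+A\ell^{-s}$
is correct and matches the theorem once $P^2_{n-m,s}(\omega)$ is read as the strip constant $P^2_{n,s}(\mathbb{R}^m\times\omega)$ --- but the assertion that $\kappa$ ``cancels under the normalisation used in \eqref{frac eigen value}'' should be replaced by an explicit statement of which seminorm carries the constant, so that both sides of your inequality are in the same convention as the theorem.
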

\smallskip
\noindent  For the case when $k=1$, the above theorem  characterises the best Poincar\'e constant for the strip like domain $\R^{m} \times \omega$ and this is established in \cite{type1, musina}.  For the local analogue of Theorem \ref{eigenvalue conv} (that is for second order elliptic operator in divergence form with Dirichlet boundary condition), we refer to  \cite{arn}. Independently,  study of problems  on $\Omega_\ell$ for large $\ell$ is  carried out by several authors in the last two decades.
For more literature on this subject we refer to  \cite{allaire,chipot1,delpino, cis, crs, zube,donato, gues} for the result considering local operator and \cite{musina, pi, karen} for nonlocal operators. we refer to \cite{dy-k} for related result, regarding the study of spectral gap of fractional Laplace like operator on rectangular domain.
 \smallskip

 The article is organized as follows. In section \ref{sec2} we provide some preparatory lemmas and well known results. In Section \ref{sec:domain}, we construct the domain $\mathcal{D}$ as in Theorem \ref{thm:main} and present the prove of Theorem \ref{thm:main}.  In Section \ref{sec:suff_cond}, we prove Theorem \ref{thm:sufficient:cond} and as an application of it, we present some examples of domains for which FP($s$) inequality is true. Finally, in Section \ref{final section}, we present the proof of Theorem \ref{eigenvalue conv}.

\section{Preliminary and technical Lemmas}\label{sec2}
 We  introduce  some notations that will be followed uniformly through  out this article. For any Lebesgue measurable subset $E \subset \mathbb{R}^n $, the measure will be  denoted by $|E|$. A ball of radius $r$ and centre at $x$ will be denoted by by $B_r(x)$. For real number $x$, $[x]$ denotes the greatest integer less than or equal to $x$. In this section we introduce  some known results and some technical lemma, that  will be useful for the proof of our result.  
  For $u \in H^s_\Omega(\rn)$, we will denote its Gagliardo semi norm by
 $$[u]_{s,\Omega,\rn} = \int_{\rn}\int_{\rn } \frac{|u(x)-u(y)|^2}{|x-y|^{n+2s}}dxdy.$$
 
 \begin{lemma}
  \label{bfc}
  If $\Omega \subset \R^n$ does not satisfy finite ball condition, then $P_{n,s}^2(\Omega) = 0$.
 \end{lemma}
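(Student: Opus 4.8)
The plan is to exploit the scale invariance of the Gagliardo seminorm under translations and dilations, and to build a test function supported on a large ball inside $\Omega$ whose Rayleigh quotient can be made arbitrarily small. Suppose $\Omega$ does not satisfy the finite ball condition, i.e. $BC(\Omega) = \infty$. Then for every $R > 0$ there is a point $x_R \in \Omega$ with $B_R(x_R) \subset \Omega$. After a translation (which changes neither $[u]_{s,\rn,\rn}$ nor $\int u^2$) we may assume $B_R(0) \subset \Omega$.

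First I would fix one nonzero $\varphi \in C_c^\infty(B_1(0))$, extended by zero to $\rn$, and set $u_R(x) := \varphi(x/R)$, so that $u_R \in C_c^\infty(B_R(0)) \subset C_c^\infty(\Omega)$ and hence $u_R \in H^s_\Omega(\rn)$. Next I would compute the two pieces of the Rayleigh quotient under this dilation. The change of variables $x = R\tilde x$, $y = R\tilde y$ in the double integral gives
\begin{align*}
[u_R]_{s,\rn,\rn} = \int_{\rn}\int_{\rn}\frac{(\varphi(x/R)-\varphi(y/R))^2}{|x-y|^{n+2s}}\,dx\,dy = R^{n-2s}\,[\varphi]_{s,\rn,\rn},
\end{align*}
while $\int_\Omega u_R^2 = \int_{\rn}\varphi(x/R)^2\,dx = R^n\int_{\rn}\varphi^2$. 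Therefore
\begin{align*}
\frac{[u_R]_{s,\rn,\rn}}{\int_\Omega u_R^2} = R^{-2s}\,\frac{[\varphi]_{s,\rn,\rn}}{\int_{\rn}\varphi^2}.
\end{align*}
Since $\varphi$ is fixed and $s > 0$, the right-hand side tends to $0$ as $R \to \infty$. As $P_{n,s}^2(\Omega)$ is the infimum of such quotients over all admissible test functions, letting $R \to \infty$ yields $P_{n,s}^2(\Omega) = 0$.

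The only points needing a little care are that $[\varphi]_{s,\rn,\rn} < \infty$ for $\varphi \in C_c^\infty$ (standard: split into the near-diagonal region, where one uses the Lipschitz bound $|\varphi(x)-\varphi(y)| \le \|\grad\varphi\|_\infty |x-y|$ together with $n+2s-2 < n$, and the far region, where one uses $|\varphi(x)-\varphi(y)| \le 2\|\varphi\|_\infty$ together with integrability of $|x-y|^{-n-2s}$ away from the diagonal), and that the translation bringing $x_R$ to the origin genuinely leaves both integrals invariant, which is immediate from the translation invariance of Lebesgue measure and of the kernel $|x-y|^{-n-2s}$. I do not anticipate a real obstacle here; the scaling computation is the heart of the matter and the argument is otherwise routine.
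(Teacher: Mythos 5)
Your proof is correct and takes essentially the same approach as the paper's: fix a nonzero $\varphi \in C_c^\infty(B_1(0))$, translate to the center of a large ball $B_R(x_R) \subset \Omega$, rescale by $R$, and observe that the Rayleigh quotient scales as $R^{-2s} \to 0$. The extra remarks on finiteness of $[\varphi]_{s,\rn,\rn}$ and translation invariance are fine but not needed beyond what the paper already takes as standard.
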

 \begin{proof}
   Fix $0\neq U   \in C_c^\infty(B_1(0))$ and define $ \lambda := \frac{[U]_{s,B_1(0),\rn}}{\int_{B_1(0)}|U(x)|^2 dx}$. Clearly, $\lambda< \infty$.  Domain  not satisfying finite  ball condition implies that for any $R > 0$ (large) there exist $x_R \in \Omega $ such that  $B_R(x_R) \subset \Omega$. Shifting  the coordinate system  to $x_R$ and defining $v(x) = U(\frac{x}{R}) $ it is easy  to see that
\begin{align*}
 \hspace*{4.1cm}P_{n,s}^2(\Omega) \leq  \frac{[v]_{s,\Omega,\rn}}{\int_\Omega v^2} = R^{-2s}\lambda \xrightarrow[R\to \infty]{} 0. \hspace{5.1cm} \qedhere
\end{align*}
 \end{proof}

Next, we establish that for $s\in (0,\frac{1}{2})$ even extended finite ball condition is necessary for FP(s) inequality  to hold. 
\begin{lemma}\label{lem:extended-nec-cnd}
Let $s\in (0, \frac{1}{2})$. If $\Omega \subset \R^n$ does not satisfy extended finite ball condition (i.e. $\overline{BC}(\Omega)=\infty$), then $P_{n,s}^2(\Omega) = 0$.
\end{lemma}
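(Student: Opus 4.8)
\smallskip
\noindent The plan is to imitate the proof of Lemma~\ref{bfc}: place a single rescaled bump inside a large ball $B_R(x_R)$ whose intersection with $\Omega^c$ is Lebesgue-null, and let the $R^{-2s}$ scaling of the Gagliardo seminorm drive the Rayleigh quotient to $0$. Fix $0\neq U\in C_c^\infty(B_1(0))$ and set $\lambda:=[U]_{s,B_1(0),\rn}/\int_{B_1(0)}|U|^2<\infty$. Since $\overline{BC}(\Omega)=\infty$, for every large $R>0$ there is $x_R\in\Omega$ with $|B_R(x_R)\cap\Omega^c|=0$; translating the origin to $x_R$, set $v_R(x):=U(x/R)$, so that $\operatorname{supp}v_R$ is a compact subset of $B_R(x_R)$.

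The crux is to check that $v_R$ is an admissible competitor, i.e.\ that $v_R\in H^s_\Omega(\rn)$. It is smooth with compact support, so $v_R\in H^s(\rn)$; and since $\{v_R\neq 0\}\subset B_R(x_R)$ while $|B_R(x_R)\cap\Omega^c|=0$, we have $v_R=0$ a.e.\ in $\Omega^c$. When $\Omega$ has continuous boundary this is already enough, via the identity $H^s_\Omega(\rn)=\{u\in H^s(\rn):u=0\text{ a.e.\ in }\Omega^c\}$ recalled in the introduction; in general one must still exhibit $v_R$ as an $H^s(\rn)$-limit of functions in $C_c^\infty(\Omega)$, which one does by multiplying $v_R$ by cut-offs $\eta_\delta\in C_c^\infty(\rn\setminus F_\delta)$, $\eta_\delta\equiv 1$ off $F_{2\delta}$, $|\grad\eta_\delta|\lesssim\delta^{-1}$, where $F_\delta$ is the $\delta$-neighbourhood of the closed Lebesgue-null set $F:=\Omega^c\cap\operatorname{supp}v_R$. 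The $L^2$-error of this excision is $\le\|v_R\|_\infty\,|F_{2\delta}|^{1/2}\to 0$, and -- this is exactly where the hypothesis $s<\tfrac12$ enters -- the Gagliardo-seminorm error $[v_R-\eta_\delta v_R]_{s,\rn,\rn}$ also tends to $0$ as $\delta\to 0$, since for $s<\tfrac12$ a sufficiently thin Lebesgue-null obstacle has vanishing $H^s$-capacity and is therefore removable (the same phenomenon that makes the union of half-lines in the example $\R^2\setminus(\mathbb{Z}\times(\cdots))$ of the introduction invisible to $[\,\cdot\,]_{s,\rn,\rn}$). Hence $v_R\in H^s_\Omega(\rn)$.

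Once admissibility is secured, the rest is the computation from Lemma~\ref{bfc}. The change of variables $x=Rz$, $y=Rw$ gives $[v_R]_{s,\Omega,\rn}=R^{n-2s}[U]_{s,B_1(0),\rn}=R^{n-2s}\lambda\int_{B_1(0)}|U|^2$, while $\int_\Omega v_R^2=\int_{\rn}v_R^2=R^{n}\int_{B_1(0)}|U|^2$ (using once more $|B_R(x_R)\cap\Omega^c|=0$ and $\operatorname{supp}U\subset B_1(0)$), so that
\[
 P_{n,s}^2(\Omega)\;\le\;\frac{[v_R]_{s,\Omega,\rn}}{\int_\Omega v_R^2}\;=\;R^{-2s}\lambda\;\xrightarrow[R\to\infty]{}\;0 ,
\]
whence $P_{n,s}^2(\Omega)=0$. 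The only genuinely non-routine point is the admissibility claim $v_R\in H^s_\Omega(\rn)$, and I expect that to be the main obstacle in writing the argument cleanly: it is precisely there that $s\in(0,\tfrac12)$ is indispensable (a thin null portion of $\Omega^c$ can be cut out of the support of the test function at vanishing cost in $[\,\cdot\,]_{s,\rn,\rn}$, whereas for $s\ge\tfrac12$ even a flat slice of boundary is registered by the seminorm), and everything afterwards is the scaling argument already used for Lemma~\ref{bfc}.
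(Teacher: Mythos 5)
Your construction diverges from the paper's in one substantive respect, and that divergence is where the gap lies. The paper's proof does not use a rescaled smooth bump. It takes the \emph{indicator function} $\psi_k=\mathbb{1}_{B_{R_k}(x_k)\cap\Omega}$, which coincides a.e.\ with $\mathbb{1}_{B_{R_k}(x_k)}$, computes $[\psi_k]_{s,\Omega,\rn}=2R_k^{n-2s}P_s(B_1(0))$, and invokes \cite[Corollary 4.4]{brasco-Lindgren-Parini} to conclude that the $s$-perimeter $P_s(B_1(0))$ is \emph{finite precisely because $s<\tfrac12$}. That finiteness of the fractional perimeter is the sole point in the paper's argument where $s<\tfrac12$ enters; the rest is the same scaling computation as in Lemma~\ref{bfc}. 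By choosing a smooth bump $v_R=U(\cdot/R)$ you make $[v_R]_{s,\rn,\rn}$ trivially finite for \emph{every} $s\in(0,1)$, so the restriction $s<\tfrac12$ has to re-enter somewhere else in your argument --- and the place you try to insert it is not correct.

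Concretely: the claim that ``for $s<\tfrac12$ a sufficiently thin Lebesgue-null obstacle has vanishing $H^s$-capacity and is therefore removable'' is false as a general statement. A closed Lebesgue-null set $F\subset\rn$ can have Hausdorff dimension as large as $n$, and any compact set of Hausdorff dimension strictly greater than $n-2s$ has \emph{positive} $(s,2)$-capacity. For $s<\tfrac12$ one has $n-2s>n-1$, so while hyperplane pieces (as in the grid example $\R^2\setminus(\mathbb{Z}\times\cdots)$ from the introduction, which have dimension $n-1$) are indeed removable, a Lebesgue-null Cantor-type set of dimension between $n-2s$ and $n$ is not. In that case your cut-offs $\eta_\delta$ do \emph{not} satisfy $[v_R-\eta_\delta v_R]_{s,\rn,\rn}\to0$: the $2\delta$-tube $F_{2\delta}$ has Lebesgue measure going to $0$, so the $L^2$-error vanishes, but its fractional $s$-perimeter can stay bounded away from $0$, and that is exactly what controls the Gagliardo-seminorm error. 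So the admissibility step, which you correctly flag as the crux, is not established by your argument. (It is fair to observe that the paper itself disposes of this point tersely --- it writes ``note that $\psi_k\in H^s_\Omega(\rn)$ (see the calculation below)'' and the calculation only shows finite seminorm together with $\psi_k=0$ a.e.\ on $\Omega^c$, which settles membership in $\{u\in H^s(\rn):u=0 \ \text{a.e.\ in } \Omega^c\}$ but does not by itself put $\psi_k$ in the closure of $C_c^\infty(\Omega)$ for an arbitrary open $\Omega$. Your instinct that something more needs to be said here is reasonable; the fix you propose is just not a valid one.) Beyond the admissibility step, the scaling computation $[v_R]_{s,\Omega,\rn}=R^{n-2s}\lambda\int|U|^2$ and $\int_\Omega v_R^2=R^n\int|U|^2$ is fine and matches the paper's final step.
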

\begin{proof}
The assumption $\overline{BC}(\Omega)=\infty$ implies that there exists sequences $\{x_k\}_k\subset \Omega$ and $\{R_k\}_k\subset \R^+$ such that $R_k\to \infty$ and 
\begin{align*}
    |B_{R_k}(x_k) \cap \Omega^c| =0.
\end{align*}
Hence by De morgan's law we have $|B_{R_k}(x_k) \cap \Omega|= |B_{R_k}(x_k) |$. Consider the function 
\begin{align*}
   \psi_k = \begin{cases}
       1 \quad & \text{in} \ B_{R_k}(x_k) \cap \Omega, \\
       0 \quad &  \text{in} \ \big(B_{R_k}(x_k) \cap \Omega\big)^c,
    \end{cases}
 \qquad   \tilde \psi_k= \begin{cases}
       1 \quad & \text{in} \ B_{R_k}(x_k), \\
       0 \quad &  \text{in} \ \big(B_{R_k}(x_k))^c,
    \end{cases}
\end{align*}
and note that $\psi_k \in H^s_{\Omega}(\rn)$ (see the calculation below). As $|B_{R_k}(x_k) \cap \Omega|= |B_{R_k}(x_k) |$, we find
\begin{align*}
    \int_{\rn\times \rn} \frac{(\psi_k(x)-\psi_k(y))^2}{|x-y|^{n+2s}} dx dy = \int_{\rn\times \rn} \frac{(\tilde \psi_k(x)-\tilde \psi_k(y))^2}{|x-y|^{n+2s}} dx dy \\ 
    = 2 \int_{B_{R_k}(x_k)} \int_{(B_{R_k}(x_k))^c} \frac{1}{|x-y|^{n+2s}} dx dy = 2 R_k^{n-2s} P_s(B_1(0))
\end{align*}
where $P_s(B_1(0)) := \int_{B_{1}(0)}\int_{(B_{1}(0))^c} \frac{1}{|x-y|^{n+2s}} dx dy$, and the last equality follows by change of variable. By \cite[Corollary 4.4]{brasco-Lindgren-Parini} we find  $P_s(B_1(0))<\infty$ for $s\in (0, \frac{1}{2})$.  Therefore we have
\begin{align*}
    P^2_{n,s}(\Omega) \leq \frac{\int_{\rn\times \rn} \frac{(\psi_k(x)-\psi_k(y))^2}{|x-y|^{n+2s}} dx dy}{\int_{\rn} |\psi_n|^2 dx} =\frac{2 R_k^{n-2s}P_s(B_1(0))}{|B_{R_k}(x_k)|} \leq K R_k^{-2s}
    \xrightarrow[k\to \infty]{} 0.
\end{align*}
This completes the proof. 
\end{proof}

The following lemma provides a sufficient condition on family of domains on real line for FP($s$) inequality to hold.
\begin{lemma}\label{ap}
Consider  $\displaystyle \Omega= \cup_{j\in \mathbb{Z}}\, (a_j, b_j) \subset \mathbb{R}$, where $(a_j,b_j)$ are mutually disjoint. Also let $\displaystyle M = \max_{j}|a_j -b_j| < \infty$ (that is $BC(\Omega) < M$). Then, for $s \in (\frac{1}{2},1)$, one has for some constant $C >0,$
$$P_{1,s}^2(\Omega) \geq CM^{-2s}.$$
\end{lemma}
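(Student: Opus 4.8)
The plan is to reduce the problem to a single interval by exploiting locality of the kernel when $s>\frac12$, where the Gagliardo seminorm restricted to the complement becomes harmless, and to use the known one-dimensional Poincaré inequality on a bounded interval. First I would recall that on an interval $I=(a,b)$ of length $|I|\le M$ the fractional Poincaré inequality holds, $P_{1,s}^2(I)=|I|^{-2s}P_{1,s}^2((0,1))\ge M^{-2s}P_{1,s}^2((0,1))$, by scaling $x\mapsto x/|I|$; this gives a uniform lower bound $cM^{-2s}$ on each component separately, with $c=P_{1,s}^2((0,1))>0$. The content of the lemma is that summing over the disjoint components $ (a_j,b_j)$ does \emph{not} destroy this bound, i.e. the cross terms in $[u]_{s,\Omega,\R}$ only help. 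Given $u\in H^s_\Omega(\R)$, write $[u]_{s,\R,\R}=\sum_j [u]_{s,(a_j,b_j),(a_j,b_j)}+ (\text{cross terms})\ge \sum_j [u]_{s,(a_j,b_j),(a_j,b_j)}\ge \sum_j cM^{-2s}\int_{a_j}^{b_j}u^2 = cM^{-2s}\int_\Omega u^2$, using that $u$ vanishes off $\Omega$ so the full double integral over $\R\times\R$ dominates the diagonal block sum. Dividing gives $P_{1,s}^2(\Omega)\ge cM^{-2s}$.

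The subtlety I would have to address carefully is the first ``$\ge$'': one must check that
$$\int_\R\int_\R \frac{(u(x)-u(y))^2}{|x-y|^{1+2s}}\,dx\,dy \;\ge\; \sum_{j\in\mathbb Z}\int_{a_j}^{b_j}\int_{a_j}^{b_j}\frac{(u(x)-u(y))^2}{|x-y|^{1+2s}}\,dx\,dy,$$
which is immediate since the right-hand side integrates the same nonnegative integrand over the subdomain $\bigcup_j (a_j,b_j)^2 \subset \R\times\R$. Then on each block I invoke the one-dimensional FP(s) inequality for the interval; the only thing requiring a remark is that $u\big|_{(a_j,b_j)}$ lies in the right space, i.e. one needs the \emph{regional} fractional Poincaré inequality on a bounded interval rather than the $H^s_0$ version — and here is exactly where $s>\frac12$ enters, since for $s>\frac12$ the regional constant $P^1_{1,s}(I)$ is positive (for $s\le\frac12$ it is zero, cf. the discussion in the introduction of $P^1_{n,s}$). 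So the correct statement to quote for each block is $[u]_{s,(a_j,b_j),(a_j,b_j)}=[u]_{s,I_j,I_j}\ge P^1_{1,s}(I_j)\int_{I_j}u^2 \ge C M^{-2s}\int_{I_j}u^2$ with $C=P^1_{1,s}((0,1))>0$, again by scaling.

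The main obstacle, then, is not any single estimate but making sure the right one-dimensional inequality is cited: the naive reduction would try to use $P_{1,s}^2$ of the interval, but $u$ restricted to $(a_j,b_j)$ need not vanish at the endpoints, so one genuinely needs the regional inequality $P^1_{1,s}$, whose positivity for $s\in(\frac12,1)$ is the crucial input and the reason the hypothesis $s>\frac12$ cannot be dropped. Once that is in place the proof is three lines: restrict-and-drop cross terms, apply the scaled regional inequality on each component, and sum. I would also note for cleanliness that $C$ can be taken as $P^1_{1,s}((0,1))$, an explicit dimensional constant depending only on $s$.
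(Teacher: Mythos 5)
Your proposal is correct and follows essentially the same route as the paper: drop the cross terms to reduce to the diagonal blocks $\bigcup_j (a_j,b_j)^2$, then invoke the scaled \emph{regional} fractional Poincar\'e constant $P^1_{1,s}((0,1))>0$ on each interval, whose positivity for $s>\tfrac12$ is exactly the input the paper uses. Your remark that the regional inequality, rather than the $H^s_0$-version, is the right one to cite is precisely the point; note that the paper's phrase ``$P^1_{1,s}((0,1))<\infty$'' in this step is evidently a slip for ``$P^1_{1,s}((0,1))>0$,'' which you state correctly.
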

 \begin{proof}
For $u \in C_c^\infty(\Omega)$, we have
\begin{equation}
    \label{ffgg}
    \int_\R \int_\R \frac{|u(x) - u(y)|^2}{|x-y|^{1+2s}} dxdy
    \geq \sum_{j=-\infty}^\infty \int_{a_j}^{b_j}\int_{a_j}^{b_j} \frac{|u(x) - u(y)|^2}{|x-y|^{1+2s}} dxdy.
\end{equation}
 For $s\in \big(\frac{1}{2},1\big)$, noting that $P^1_{1,s}\big((0,1)\big)<\infty$ and by taking suitable scaling, translation and change of variable we find
 \begin{align*}
 \int_{a_j}^{b_j}\int_{a_j}^{b_j} \frac{|u(x) - u(y)|^2}{|x-y|^{1+2s}} dxdy \geq P^1_{1,s}\big((0,1)\big) |a_j-b_j|^{-2s} \int_{a_j}^{b_j} u^2 dx,
 \end{align*}
 for every $j \in \mathbb{Z}$. As $|a_j-b_j|\leq M$ we thus have
 \begin{align*}
 \int_\R\int_\R \frac{|u(x) - u(y)|^2}{|x-y|^{1+2s}} dxdy  \geq P^1_{1,s}\big((0,1)\big) M^{-2s}\sum_{j=-\infty}^\infty \int_{a_j}^{b_j}u^2(x)dx = P^1_{1,s}\big((0,1)\big) M^{-2s}\int_\Omega u^2.
 \end{align*}
 This finishes the proof of the lemma.
\end{proof}

\begin{lemma}\emph{[An inequality]}\label{inequal}
For $m\in (0,1),$ and  $a, b \in \mathbb{R}$,  \  $||a|^m -|b|^m|\leq |a-b|^m.$
 \end{lemma}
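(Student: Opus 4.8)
The plan is to reduce the claim to the \emph{subadditivity} of the map $t \mapsto t^m$ on $[0,\infty)$ for $m \in (0,1)$, and then combine this with the reverse triangle inequality $\big||a|-|b|\big| \le |a-b|$.

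First I would establish the elementary subadditivity: for all $x, y \ge 0$,
\[
(x+y)^m \le x^m + y^m .
\]
When $x+y=0$ this is trivial, and when $x+y>0$ it suffices, after dividing by $(x+y)^m$, to check that $\big(\tfrac{x}{x+y}\big)^m + \big(\tfrac{y}{x+y}\big)^m \ge 1$; but $\tfrac{x}{x+y}$ and $\tfrac{y}{x+y}$ lie in $[0,1]$ and sum to $1$, and $t^m \ge t$ for every $t \in [0,1]$ when $m \in (0,1)$, so the left-hand side is at least $\tfrac{x}{x+y} + \tfrac{y}{x+y} = 1$. (Equivalently, one may invoke concavity of $t \mapsto t^m$ on $[0,\infty)$ together with $0^m = 0$.)

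Next, given arbitrary $a, b \in \mathbb{R}$, I would note that the asserted inequality is symmetric in $a$ and $b$, so we may assume without loss of generality that $|a| \ge |b|$. Since $t \mapsto t^m$ is nondecreasing on $[0,\infty)$, we have $\big||a|^m - |b|^m\big| = |a|^m - |b|^m$. Applying the subadditivity bound with $x = |a| - |b| \ge 0$ and $y = |b|$ yields $|a|^m = \big((|a|-|b|) + |b|\big)^m \le (|a|-|b|)^m + |b|^m$, hence
\[
\big||a|^m - |b|^m\big| \le (|a|-|b|)^m .
\]
Finally, the reverse triangle inequality gives $|a| - |b| = \big||a| - |b|\big| \le |a-b|$, and monotonicity of $t \mapsto t^m$ upgrades this to $(|a|-|b|)^m \le |a-b|^m$, which closes the argument.

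There is essentially no obstacle here: every ingredient — the inequality $t^m \ge t$ on $[0,1]$, the monotonicity of $t \mapsto t^m$, and the reverse triangle inequality — is standard, and the only step deserving a moment's care is the observation that the reduction ``without loss of generality $|a| \ge |b|$'' is legitimate by the symmetry of the statement in $a$ and $b$.
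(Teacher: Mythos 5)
Your proof is correct and follows the same overall scheme as the paper's: both reduce the claim to the subadditivity of $t\mapsto t^m$ on $[0,\infty)$ and then combine this (implicitly in the paper, explicitly in your write-up) with the reverse triangle inequality $\big||a|-|b|\big|\le|a-b|$. The only real difference is how subadditivity is established: the paper differentiates $f(x)=(x+c)^m-c^m-x^m$ and uses $f'\le 0$ together with $f(0)=0$, while you normalize by $(x+y)^m$ and invoke $t^m\ge t$ on $[0,1]$ — two interchangeable standard arguments for the same auxiliary fact.
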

 \begin{proof}
 It suffices to show the result for $a,b\geq 0$. To prove the inequality, consider the function $f:\R^+ \rightarrow \R $ as $f(x)= (x+c)^m - c^m -x^m$, where $c\geq 0$ is a fixed constant. Then for any $x>0$, $$f'(x) = p \Big(\frac{1}{(x+c)^{1-p}} - \frac{1}{x^{1-p}}\Big)\leq 0.$$
  Therefore, $f(x)$ is monotonically decreasing and as $f(0)=0$, for any fixed $c\geq 0$ and $x\geq 0$ we get
 $$(x+c)^m - c^m -x^m\leq 0.$$
 Whenever, $a\geq b$ the result follows by taking $c=b$ and $x=a-b$. Whereas, for $a<b$ the result follows by taking $c=a$ and $x=b-a$.
 \end{proof}

\begin{lemma} \label{lem:dblint:id}
Let $a, b \in \mathbb{R}$ such that $|a| < |b|$, then there exist a constant depending on $n$ and $s \in (0,1)$ such that
\begin{align*}
\int_{|a|}^{|b|} \int_{-\infty}^{\infty} \frac{1}{|y|^{2+2s}}dy_2dy_1 = C(s) \Big( \frac{1}{|a|^{2s}} - \frac{1}{|b|^{2s}}\Big) .
\end{align*}
\end{lemma}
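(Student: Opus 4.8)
The plan is a direct computation via Tonelli's theorem, reducing the double integral to the product of a one–variable scaling integral and an explicit power integral. Throughout, write $y=(y_1,y_2)\in\rt$, so that $|y|^2=y_1^2+y_2^2$, and note that the left–hand side depends on $a$ and $b$ only through $\alpha:=|a|$ and $\beta:=|b|$. Thus it suffices to evaluate
$$I(\alpha,\beta):=\int_\alpha^\beta\int_{-\infty}^\infty\frac{dy_2\,dy_1}{(y_1^2+y_2^2)^{1+s}},\qquad 0<\alpha<\beta .$$
(If $\alpha=0$, the inner integral behaves like $y_1^{-1-2s}$ as $y_1\to 0^+$, which is not integrable, so both sides equal $+\infty$ and there is nothing to prove; hence assume $\alpha>0$.)

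First I would compute, for fixed $y_1>0$, the inner integral by the substitution $y_2=y_1 t$:
$$\int_{-\infty}^\infty\frac{dy_2}{(y_1^2+y_2^2)^{1+s}}=y_1^{-1-2s}\int_{-\infty}^\infty\frac{dt}{(1+t^2)^{1+s}}=\kappa_s\,y_1^{-1-2s},$$
where $\kappa_s:=\int_{-\infty}^\infty(1+t^2)^{-(1+s)}\,dt<\infty$, the finiteness being clear since the integrand decays like $|t|^{-2-2s}$ with $2+2s>1$ for every $s\in(0,1)$.

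Then, since the integrand of $I(\alpha,\beta)$ is nonnegative, Tonelli's theorem justifies writing $I(\alpha,\beta)=\kappa_s\int_\alpha^\beta y_1^{-1-2s}\,dy_1=\frac{\kappa_s}{2s}\bigl(\alpha^{-2s}-\beta^{-2s}\bigr)$, which is exactly the asserted identity with $C(s)=\kappa_s/(2s)=\frac{1}{2s}\int_{-\infty}^\infty(1+t^2)^{-(1+s)}\,dt$.

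The argument is entirely routine and there is no substantive obstacle; the only points deserving a word of care are the convergence of $\kappa_s$ (valid for all $s\in(0,1)$, in fact for all $s>-\tfrac12$) and the interchange of the order of integration, both of which are immediate from Tonelli since the integrand is nonnegative. The same computation goes through verbatim with the ambient space $\R^n$ as long as the inner variable $y_2$ is kept one–dimensional, which accounts for the (harmless) mention of $n$ in the statement; the constant $C(s)$ so produced does not actually depend on $n$.
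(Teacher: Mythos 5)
Your computation is correct and is essentially the same as the paper's: the paper substitutes $y_2=y_1\tan\theta$ where you substitute $y_2=y_1 t$, and these are the same change of variables up to $t=\tan\theta$, producing the identical constant $C(s)=\frac{1}{2s}\int_{-\pi/2}^{\pi/2}(\cos\theta)^{2s}\,d\theta=\frac{1}{2s}\int_{-\infty}^{\infty}(1+t^2)^{-(1+s)}\,dt$. Your added remark that the $\alpha=0$ case diverges and that the constant does not actually depend on $n$ is accurate and harmless.
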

\begin{proof}
Let $y=(y_1,y_2)$, then by change of variable formula we see by choosing $y_2 = y_1 \tan \theta$ that
\begin{align*}
\int_{|a|}^{|b|} \int_{-\infty}^{\infty} \frac{dy}{|y|^{2+2s}}  & = \int_{|a|}^{|b|} \bigg(\int_{-\frac{\pi}{2}}^{\frac{\pi}{2}} \frac{y_1 \sec^2 \theta \, d\theta}{y_1^{2+2s}(1+ \tan^2 \theta)^{2+2s}} \bigg) \, d y_1 \\
& = \bigg( \int_{|a|}^{|b|} y_1^{-2s-1} \, dy_1\bigg) \bigg( \int_{-\frac{\pi}{2}}^{\frac{\pi}{2}} \frac{d \theta}{(\sec \theta)^{2s}}\bigg) = C(s) \bigg( \frac{1}{|a|^{2s}} - \frac{1}{|b|^{2s}} \bigg)
\end{align*}
where, $2sC(s) =  \int_{-\frac{\pi}{2}}^{\frac{\pi}{2}} {(\cos \theta)^{2s}}d \theta$. It completes the proof.
\end{proof}

The following lemma will be used several times in the proof of our main result.

\begin{lemma}\label{maineq}Let  $ 0 <  q_1  <  q_2$, $M,N > 0$ and $s \in (0,\frac{1}{2})$. Define $\mathcal{B}_{M,N} :=  (0,M)\times (0,N)$ and $\mathcal{S}_{q_1,q_2} = {(-q_2,-q_1)\times (-\infty, \infty) }$, then
\begin{equation*}
\int_{\mathcal{B}_{M,N}} \int_{\mathcal{S}_{q_1,q_2}}\frac{dydx}{|x-y|^{2+2s}} =
C(s)N\left[   (q_1 +M)^{1-2s} -q_1^{1-2s} -(q_2+M)^{1-2s} +q_2^{1-2s} \right]
\end{equation*}
where $C(s) > 0$ is some constant depending on $s$.
\end{lemma}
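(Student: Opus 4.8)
The plan is to compute the double integral directly by iterated integration, handling the inner integral over the vertical strip $\mathcal{S}_{q_1,q_2}$ first. Fix $x = (x_1,x_2) \in \mathcal{B}_{M,N}$, so $0 < x_1 < M$ and $0 < x_2 < N$, and write $y = (y_1,y_2)$ with $-q_2 < y_1 < -q_1$ and $y_2 \in \R$. Then $|x-y|^2 = (x_1-y_1)^2 + (x_2-y_2)^2$, and since $x_1 > 0 > -q_1 > y_1$ we have $x_1 - y_1 = x_1 + |y_1| > 0$. The first step is to evaluate $\int_{-\infty}^{\infty} \bigl((x_1-y_1)^2 + (x_2-y_2)^2\bigr)^{-(1+s)} \, dy_2$; by the substitution $x_2 - y_2 = (x_1-y_1)\tan\theta$ (exactly as in Lemma \ref{lem:dblint:id}) this equals $(x_1-y_1)^{-1-2s} \int_{-\pi/2}^{\pi/2} (\cos\theta)^{2s}\, d\theta = 2sC(s)\,(x_1-y_1)^{-1-2s}$, where $C(s)$ is the same constant as in Lemma \ref{lem:dblint:id}. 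This is where the constant $C(s)$ enters, and it is finite precisely because $2s < 1$ so $(\cos\theta)^{2s}$ is integrable — consistent with the hypothesis $s \in (0,\tfrac12)$.

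Next I would carry out the remaining three integrations, which are now elementary. After the first step the integral becomes
\begin{align*}
\int_{\mathcal{B}_{M,N}} \int_{-q_2}^{-q_1} 2sC(s)\,(x_1 - y_1)^{-1-2s}\, dy_1 \, dx.
\end{align*}
Integrating in $y_1$ over $(-q_2,-q_1)$ gives $\int_{-q_2}^{-q_1}(x_1-y_1)^{-1-2s}\,dy_1 = \tfrac{1}{2s}\bigl[(x_1+q_1)^{-2s} - (x_1+q_2)^{-2s}\bigr]$. Then integrating in $x_2$ over $(0,N)$ simply multiplies by $N$ (the integrand does not depend on $x_2$). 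Finally integrating in $x_1$ over $(0,M)$: $\int_0^M (x_1+q_1)^{-2s}\,dx_1 = \tfrac{1}{1-2s}\bigl[(M+q_1)^{1-2s} - q_1^{1-2s}\bigr]$, and similarly for the $q_2$ term. Collecting the constants into a single new $C(s) > 0$ (absorbing the factor $1/(1-2s)$, which is positive since $s < \tfrac12$) yields exactly
\begin{align*}
C(s)\,N\left[(q_1+M)^{1-2s} - q_1^{1-2s} - (q_2+M)^{1-2s} + q_2^{1-2s}\right].
\end{align*}

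There is no real obstacle here; the lemma is a direct computation. The only points requiring a little care are: (a) justifying the use of Fubini/Tonelli to swap the order of integration, which is immediate since the integrand is nonnegative; (b) checking that all the exponents behave as claimed, in particular that the primitive $x_1^{1-2s}$ rather than a logarithm arises, which again uses $2s \neq 1$; and (c) tracking that the final bracketed expression is indeed positive (so the identity is consistent), which follows from strict concavity of $t \mapsto t^{1-2s}$ on $(0,\infty)$: the increment $(q+M)^{1-2s} - q^{1-2s}$ is strictly decreasing in $q$, so the $q_1$-increment exceeds the $q_2$-increment since $q_1 < q_2$. I would present steps (a) in one line and then just display the chain of three elementary integrations.
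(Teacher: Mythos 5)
Your proof is correct and takes essentially the same route as the paper: the paper first substitutes $z = y - x$ and invokes Lemma \ref{lem:dblint:id} for the inner two-dimensional integral, then integrates in $x_1$; you simply unroll that lemma by performing the $y_2$- and $y_1$-integrations explicitly before integrating in $x$, arriving at the same chain of elementary computations and the same constant.
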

\begin{proof}
For fixed $x \in {B}_{N,N},$ introduce the change  of variable by $z = y-x.$ Then $z \in (-q_2-x_1,-q_1-x_1)\times (-\infty, \infty)$. Then  the required integral becomes,
$$\int_{\mathcal{B}_{M,N}} \int_{\mathcal{S}_{q_1,q_2}}\frac{dydx}{|x-y|^{2+2s}} = \int_{\mathcal{B}_{M,N}} \left(  \int_{(-q_2-x_1,-q_1-x_1)\times (-\infty, \infty)}\frac{dz}{|z|^{2+2s}}\right) dx. $$
Now  from the previous lemma we obtain,
\begin{multline*}
\int_{\mathcal{B}_{M,N}} \int_{\mathcal{S}_{q_1,q_2}}\frac{dydx}{|x-y|^{2+2s}} =  C(s)\int_{\mathcal{B}_{M,N}}\left\{  \frac{1}{(q_1+x_1)^{2s}} -\frac{1}{(q_2+x_1)^{2s}} \right\}dx \\
=C(s)N\int_{0}^M \left\{  \frac{1}{(q_1+x_1)^{2s}} -\frac{1}{(q_2+x_1)^{2s}} \right\}dx_1 \\
=C(s)N\left[   (q_1 +M)^{1-2s} -q_1^{1-2s} -(q_2+M)^{1-2s} +q_2^{1-2s} \right].
\end{multline*}
This completes the proof of the lemma.
\end{proof}

\section{Domain not having fractional Poincar\'e} \label{sec:domain}

\noindent  Let $x= (x_1,x_2) \in \mathbb{R}^2$ and define a decreasing sequence $\{s_j\}_{j\in \N}$  with  the following property:
\begin{equation}\label{dec}
\sum_{m=0}^\infty s_{m}^{1-2s} <\infty.
\end{equation}
Precise form of the sequence $s_j$ will be given later. We will construct the domain with the countable union of infinite strips. The definition of the domain is the following:
\begin{align*}
\mathcal{C}_k &:= \big( a_k, \,1+ a_k \big) \times (-\infty, \infty) ; \quad k\geq 0 \quad \mbox{where,} \quad a_k:= k+ \sum_{j=0}^k s_j , \ s_0 =0 \\
\mathcal{S}_k &:= \big( a_k-s_k, \,a_k \big) \times (-\infty, \infty) ; \quad k\geq 1.
\end{align*}
$\mathcal{S}_k$ denotes the strip between $\mathcal{C}_{k-1}$ and $\mathcal{C}_k$. We denote the strips similarly on the left hand side of $Y$-axis as well,
\begin{align*}
\mathcal{C}_k &:= \big( a_k, \, a_k+1 \big) \times (-\infty, \infty) ; \quad k\leq -1 \quad \mbox{where,} \quad a_k:= k - \sum_{j=k+1}^0 s_{-j} , \\
\mathcal{S}_k &:= \big( a_k -s_{|k|}, \,a_k \big) \times (-\infty, \infty) ; \quad k\leq -1.
\end{align*}
For convention we denote $S_0= \emptyset $. Let us now define a domain $\Omega_0$ by
\begin{align}\label{domain_eg}
\Omega_0 = \displaystyle \bigcup_{k=-\infty}^{\infty} \mathcal{C}_k .
\end{align}
We note  that $\Omega_0$ is  symmetric (reflection)  about  the line $x_1 =0$. \smallskip

Finally, we denote $ D = (-\infty, \infty) \times (-2,-1)$ and take the following simply connected domain to proof Theorem \ref{thm:main}:
\begin{align*}
\mathcal{D} := \Omega_0 \cup D.
\end{align*}

Now we present the proof of Theorem \ref{thm:main}. We remark that the value of arbitrary constant will be denoted by $C, C(s)$ or $K$  in the proof and it may change from line to line.
\vspace{0.5cm}

\noindent \textbf{Proof of the Theorem \ref{thm:main}.}
Clearly $\mathcal{D}$ is a simply connected domain and $\overline{BC}(\mathcal{D})<\infty$.  Proof of the theorem consists of different steps. We will prove  the theorem by constructing a sequence of function $\{\psi_k\}_k$ for each $k \in \N$ and then claiming $P_{s,\mathcal{D}}(\psi_k)\rightarrow 0$ as $k \rightarrow \infty$, where
\begin{align*}
    P_{s,\mathcal{D}}(\psi) =  \frac{[\psi]_{s,\mathcal{D},\mathbb{R}^2}}{\int_{\mathcal{D}} \psi^2(x) \, dx}.
\end{align*}
For $j \in \mathbb{Z},\ k_0 \in \mathbb{Z}^+$ define $C_j^{k_0} := \{ (x_1, x_2)  \in \mathcal{C}_j \ | \ x_2 \in (0, k_0) \}$ and the function
\begin{equation}
    \label{psi}
\displaystyle\psi_{k,k_0}(x)= \left\{
       \begin{array}{ll}
      1   \quad &\mbox{for}\   x\in \cup_{j=0}^k C_j^{k_0},  \\
         0    \quad &\mbox{for} \  x\in \mathbb{R}^2 \setminus\cup_{j=0}^k C_j^{k_0}.    
       \end{array}
  \right.
\end{equation}
Note that ${support}\{\psi_{k,k_0}\} \subset \Omega_0 \subset \mathcal{D}$. Without any loss of generality we will simply denote  $ \psi_{k,k_0}$ by $\psi$ for rest of the argument.
\smallskip

\noindent \textbf{Step 1:}\, We write
\begin{align*}
& \int_{\rt}\int_{\rt }  \frac{\big(\psi(x+y)-\psi(x)\big)^2}{|y|^{2+2s}} \, dy \, dx \\ = & \int_{x\in\Omega_0 }\int_{x+y\in \Omega_0 }\frac{\big(\psi(x+y)-\psi(x)\big)^2}{|y|^{2+2s}} \, dy \, dx + 2\int_{x\in\Omega_0 }\int_{x+y\in \Omega_0^c }\frac{\big(\psi(x+y)-\psi(x)\big)^2}{|y|^{2+2s}} \, dy \, dx \\
&  + \int_{x\in\Omega_0^c }\int_{x+y\in \Omega_0^c }\frac{\big(\psi(x+y)-\psi(x)\big)^2}{|y|^{2+2s}} \, dy \, dx \, \\
= & \, \, \mathbb{I}_1 + 2  \, \mathbb{I}_2,
\end{align*}
where $\mathbb{I}_1$ and $\mathbb{I}_2$ denotes the first and second integral in the previous expression. The third integral becomes zero as $\psi=0$ on $\Omega_0^c$. We will estimate $\mathbb{I}_1$ and $\mathbb{I}_2$ separately.\smallskip

\noindent \textbf{ Step 2 (Estimate of $\mathbb{I}_2$)}:
 \ From \eqref{psi} we have
\begin{align*}
\mathbb{I}_2 & = \int_{x\in \Omega_0} \int_{x+y \in \Omega_0^c} \frac{ \psi^2(x)}{|y|^{2+2s}} \, dy \, dx    = \sum_{m=-\infty}^{\infty}\bigg[  \int_{\mathcal{C}_m} \psi^2(x) \Big( \sum_{j=-\infty}^{\infty} \int_{x+y \in \mathcal{S}_j} \frac{dy}{|y|^{2+2s}}\Big) dx\bigg]\\
&= \sum_{m=0}^{k} \sum_{j=-\infty}^{\infty} \int_{x \in \mathcal{C}_m} \psi^2(x) \bigg( \int_{x+y \in \mathcal{S}_j} \frac{dy}{|y|^{2+2s}}\bigg) dx  
\\&= \sum_{m=0}^{k} \sum_{j=-\infty}^{[m/2]} \mathbb{I}_{2,m,j} + \sum_{m=0}^{k} \sum^{j=\infty}_{\substack{[m/2]+1 \\ j\neq m , m+1}} \mathbb{I}_{2,m,j} + \sum_{m=0}^k (\mathbb{I}_{2,m,m} + \mathbb{I}_{2,m,m+1})\\
& := \mathbb{J}_1 +\mathbb{J}_2+  \mathbb{J}_3.
\end{align*}
In the above expression  $$\mathbb{I}_{2,m,j} = \int_{\mathcal{C}_m} \psi^2(x) \Big( \int_{x+y \in \mathcal{S}_j} \frac{dy}{|y|^{2+2s}}\Big) dx. $$
For any fixed $x = (x_1, x_2)\in \rt$ and $j
\in \mathbb{Z} \setminus \{0\}$,  $x+y
\in \mathcal{S}_j $ if and only if $y \in \mathcal{S}_j- \{x\}$. Then by Lemma \ref{lem:dblint:id}, we have
\begin{align*}
    \int_{x+y \in \mathcal{S}_j} \frac{dy}{|y|^{2+2s}} = C \left|\frac{1}{|a_j  -x_1|^{2s}} - \frac{1}{|a_j -s_{|j|} -x_1|^{2s}} \right| := \mathcal{G}_{j}(x).
\end{align*}
Therefore,  by \eqref{psi} the term $\mathbb{I}_{2,m,j}$ can be written as
\begin{align} \label{esti1}
\mathbb{I}_{2,m,j} & =  \int_{x \in \mathcal{C}_m} \psi^2(x) \mathcal{G}_{j}(x) \, dx  =  \int_{x \in \mathcal{C}_m^{k_0}}\mathcal{G}_{j}(x)dx.
\end{align}
\noindent \underline{Estimate for $\mathbb{J}_3$:}\, It is enough to estimate the first term  $\sum_{m=0}^k \mathbb{I}_{2,m,m}$ of $\mathbb{J}_3$, as the estimate for the other term follows similarly.
 Now, by making a simple modification of Lemma \ref{maineq} in accordance to apply for $\mathbb{I}_{2,m,m}$, we have that  
\begin{equation*}
\label{im1}
| \mathbb{I}_{2,m,m}|=  C(s)k_0 |1 - (1+s_{m})^{1-2s}+ s_{m}^{1-2s}| \leq C(s) k_0 s_{m}^{1-2s}.
\end{equation*}
One can use Taylor's expansion or Lemma \ref{inequal} to obtain the second last inequality in the previous line. Using \eqref{esti1} with $j=m$, we have
\begin{equation}
    \label{j_3} \mathbb{J}_3 = \sum_{m=0}^k( \mathbb{I}_{2,m,m} + \mathbb{I}_{2,m,m+1}) \leq C(s) k_0\sum_{m=0}^k s_m^{1-2s}.
\end{equation}
\noindent \underline{Estimate for $\mathbb{J}_1$:}\
We note, given $2s<1$, there exist $P_0 \in \mathbb{N}$  such that $\frac{1}{P_0+1}< 2s \leq \frac{1}{P_0}$. First, denote $A_j := |a_j-s_{|j|} -x_1|^{2s} $ and $B_j:=|a_j  -x_1|^{2s}$.  Multiply the  numerator and denominator of $\mathcal{G}_{j}(x)$ by $\sum_{\ell =0}^{P_0-1} A_j^{P_0-1-\ell}B_j^{\ell}$ to get
\begin{eqnarray}\label{ijm}
\mathcal{G}_{j}(x) =  \frac{|A_j^{P_0} - B_j^{P_0}|}{A_j B_j \Big( \displaystyle  \sum_{\ell =0}^{P_0-1} A_j^{P_0-1-\ell}B_j^{\ell} \Big)} .
\end{eqnarray}

From the definition of $\mathcal{C}_m$ and $ \mathcal{S}_j$, we see that whenever $x\in \mathcal{C}_m$,
\begin{equation}
 \label{lowerbd_qt1}
|a_j-x_1| ;\,  |a_j -s_{|j|} -x_1| \geq \left\{
       \begin{array}{ll}
      |m-j| -1    \quad &\mbox{for}\  j\geq m+2,  \\
         |m-j|    \quad &\mbox{for} \  j \leq m-1.    
       \end{array}
  \right.
\end{equation}
As $0<2sP_0<1$, from Lemma \ref{inequal} we obtain
\begin{equation}\label{dd}
|A_j^{P_0} - B_j^{P_0} |\leq s_{|j|}^{2sP_0}.
\end{equation}
 Therefore from \eqref{ijm}, \eqref{lowerbd_qt1} and \eqref{dd}, we get
\begin{equation}
    \label{psid}
 \displaystyle
|\mathcal{G}_{j}| \leq \left\{
       \begin{array}{ll}
      \frac{s_{|j|}^{2sP_0}}{(|m-j| -1)^{2s(P_0+1)}}   \quad &\mbox{for}\    j \geq m+2,  \\
          \frac{s_{|j|}^{2sP_0}}{|m-j|^{2s(P_0+1)}}    \quad &\mbox{for} \  j\leq m-1.
       \end{array}
  \right.
\end{equation}
Therefore from \eqref{esti1}, we have
\begin{equation*}
    \label{estt3}
 \displaystyle
\mathbb{I}_{2,j,m} \leq \left\{
       \begin{array}{ll}
      \frac{k_0s_{|j|}^{2sP_0}}{(|m-j| -1)^{2s(P_0+1)}}   \quad &\mbox{for}\    j \geq m+2,  \\
          \frac{k_0s_{|j|}^{2sP_0}}{|m-j|^{2s(P_0+1)}}    \quad &\mbox{for} \  j\leq m-1.
       \end{array}
  \right.
\end{equation*}

 Since $s_j \rightarrow 0$, we can find a constant $C$ such that $s_{|j|} \leq C, \forall j.$ Using this, we have
\begin{align}\label{j11}
\mathbb{J}_1 = \sum_{m=0}^k\sum_{j=-\infty}^{[\frac{m}{2}]} \mathbb{I}_{2,j,m}  
& \leq Ck_0\sum_{m=0}^k \sum_{j=-\infty}^{[\frac{m}{2}]} \frac{1}{|m-j|^{2s(P_0+1)}} \leq Ck_0 \sum_{m=0}^k\sum_{j=[\frac{m}{2}]}^{\infty} \frac{1}{j^{2s(P_0+1)}}\notag\\& \leq Ck_0\sum_{m=0}^k \int_{[\frac{m}{2}]}^{\infty} \frac{dz}{z^{2s(P_0+1)}} \leq C(s) k_0\, \sum_{m=0}^km^{1 - 2s(P_0+1)}\notag\\&= Ck_0k^{2 - 2s(P_0+1)}.
\end{align}
\noindent \underline{Estimate for $\mathbb{J}_2$:}\
Using \eqref{psid} and the decreasing property of $\{s_j\}_j$, we obtain,
\begin{equation*}
    \label{fgg}
    \sum_{\substack{j=[\frac{m}{2}]+1, \\ j \neq m, m+1}}^{\infty} \mathcal{G}_{j} \leq s_{[\frac{m}{2}]}^{2sP_0}\sum_{j=[\frac{m}{2}]+1}^{m-1} \frac{1}{|j-m|^{2s(P_0+1)}}  + s_{m+2}^{2sP_0}\sum_{j=m+2}^{\infty} \frac{1}{|j-m|^{2s(P_0+1)}}.
\end{equation*}
Using \eqref{dec} and  $\sum_{j=1}^{\infty} \frac{1}{j^{2s(P_0+1)}} < K$ for some constant $K>0$, we obtain
\begin{align}\label{dddc}
 \sum_{\substack{j=[\frac{m}{2}]+1,\\ j \neq m, m+1}}^{\infty} \mathcal{G}_{j} \leq 2K s_{[\frac{m}{2}]}^{2sP_0}.
\end{align}
 From \eqref{esti1} and \eqref{dddc}, we get
\begin{align}
   \label{poi}
\mathbb{J}_2 &=  \sum_{m=0}^k \bigg(\int_{\mathcal{C}_m^{k_0}} \sum_{\substack{j=[\frac{m}{2}],\\ j \neq m, m+1}}^{\infty} \mathcal{G}_{j} \bigg) \leq Kk_0 \sum_{m=0}^k s_{[\frac{m}{2}]}^{2sP_0}.
\end{align}
Combining \eqref{j_3}, \eqref{j11} and \eqref{poi},
\begin{equation}
    \label{lp}
    \mathbb{I}_2 \leq Ck_0 \left( k^{2-2s(P_0+1)} + \sum_{m=0}^k s_{m}^{1-2s} +  \sum_{m=0}^k s_{[\frac{m}{2}]}^{2sP_0}\right).
\end{equation}

\noindent \textbf{Step 3 (Estimate of $\mathbb{I}_1$):} \, We will now estimate the term $\mathbb{I}_1$.  After changing the variable, we write
\begin{align}\label{i1}
\mathbb{I}_1 &= \sum_{j,m=-\infty}^{\infty} \int_{\mathcal{C}_m\times \mathcal{C}_j} \frac{\big(\psi(x)-\psi(y)\big)^2}{|x-y|^{2+2s}} \, dy \, dx \nonumber\\
&=\bigg[2 \bigg( \sum_{j=-\infty}^{-1}\sum_{m=0}^k+  \sum_{j=k+1}^{\infty}\sum_{m=0}^k\bigg) + \sum_{j=0}^{k}\sum_{m=0}^k \bigg] \int_{x\in \mathcal{C}_m}\int_{y\in \mathcal{C}_j}\frac{\big(\psi(x)-\psi(y)\big)^2}{|x-y|^{2+2s}} \,  dy \, dx\nonumber \\
& := 2(A_1 + A_2) + A_3 . \notag
\end{align}
The estimate of $A_1$ and $A_2$ will be similar and follow the similar line of argument as in Step 2, whereas for the term $A_3$ the arguments will be quite different and delicate. \smallskip

\noindent \underline{Estimate of $A_1$ and $A_2$:} \,
We start with the term $A_1$,
\begin{align*}
A_1 & = \sum_{j=-\infty}^{-1}\sum_{m=0}^k \int_{x\in \mathcal{C}_m}\int_{y\in \mathcal{C}_j}\frac{\psi^2(x)}{|x-y|^{2+2s}} \,  dy \, dx \\&\leq C  \sum_{j=-\infty}^{-1}\sum_{m=0}^k  \int_{x\in \mathcal{C}_m^{k_0}}\bigg| \frac{1}{|a_j-x_1|^{2s}} - \frac{1}{|a_j +1 -x_1|^{2s}}\bigg| dx.
\end{align*}
When $(j,m) = (-1,0)$, we can estimate
$$\int_{x\in \mathcal{C}_0}\int_{y\in \mathcal{C}_{-1}}\frac{\psi^2(x)}{|x-y|^{2+2s}} \, dx \, dy  \leq Ck_0,$$
after using Lemma \ref{maineq}, with $q_1=0, q_2= 1, M=1$ and $N=k_0$.
Next we use the similar argument as done for the term in Estimate of $\mathbb{J}_1$ and the fact that
$$|a_j +1 -x_1|, |a_j -x_1|\geq m-j-1 \quad \mbox{for} \quad (j,m) \in \{\cdots,-1\}\times \{0,\cdots,k\} \setminus\{(-1,0)\}$$
to obtain
\begin{equation}\label{a1}
 A_1 \leq   C k_0 + C \sum_{j=-\infty}^{-1}\sum_{\substack{m=0\\(j,m)\neq (-1,0)}}^k \frac{ |\mathcal{C}_m^{k_0}|}{(m-j-1)^{2s(P_0+1)}}\leq C k_0 \left( k^{2-2s(P_0+1)} +1 \right).
\end{equation}
The estimate of $A_2$ is exactly similar, in fact one can also see this by observing that  $A_2 \leq A_1$ (as the integrand in $A_1$ is point wise less  than the integrand in $A_2$, after a change of variable.)
Therefore,
\begin{equation}
    \label{a2w}
    A_1+ A_2 \leq C k_0 \left( k^{2-2s(P_0+1)} +1 \right).
\end{equation}
\noindent \underline{Estimate of $A_3$:} \,
Finally, we estimate the term $A_3$. Note that for each $j,m$, one has
$$\int_{\mathcal{C}_j}\int_{\mathcal{C}_m}  \frac{\big(\psi(x)-\psi(y)\big)^2}{|x-y|^{2+2s}} \ dxdy \leq \int_{\mathcal{C}_j}\int_{\mathcal{C}_j}  \frac{\big(\psi(x)-\psi(y)\big)^2}{|x-y|^{2+2s}} \ dxdy.$$
To see this, we first make a change of variable by performing a reflection $T$ with respect to $x_2=\frac{a_m-(a_j+1)}{2}$, which sends $C_m$ to $C_j$(without loss of generality assuming $m>j$). One can easily verify that  $|x-T(y)|\geq |x-y|$ for any $x,y \in C_j$.
Therefore, above estimate holds by noting $\psi(T(y))= \psi(y)$, by the definition of $\psi$.  Hence,
\begin{align*}
A_3 = \sum_{{j,m =0}}^{k} \int_{\mathcal{C}_j}\int_{\mathcal{C}_m}  \frac{\big(\psi(x)-\psi(y)\big)^2}{|x-y|^{2+2s}} \ dxdy \leq (k+1) \sum_{j=0}^k\int_{\mathcal{C}_j}\int_{\mathcal{C}_j}  \frac{\big(\psi(x)-\psi(y)\big)^2}{|x-y|^{2+2s}} \ dxdy.
\end{align*}
 Further, we note that we can replace the domain of integration $\mathcal{C}_j \times \mathcal{C}_j$ by $\mathcal{C}_0\times \mathcal{C}_0$, which follows by simple change of variables. Therefore from \eqref{psi},
\begin{align*}
P:= \int_{\mathcal{C}_0\times \mathcal{C}_0} \frac{\big(\psi(x)-\psi(y)\big)^2}{|x-y|^{2+2s}} \ dxdy = 2\int_{ \mathcal{C}_0^{ k_0}}\int_{\big ( {\mathcal{C}_0 \setminus \mathcal{C}_0^{ k_0}}\big )} \frac{dydx}{|x-y|^{2+2s}}.
\end{align*}
Using $\mathcal{C}_0 \setminus \mathcal{C}_0^{ k_0} \subset (-\infty,\infty)\times(k_0, \infty),$ we get
$$P \leq 2\int_{ \mathcal{C}_0^{ k_0}}  \int_{(-\infty,\infty)\times(k_0, \infty)}\frac{dydx}{|x-y|^{2+2s}}=  2\int_{ \mathcal{C}_0^{ k_0}}\left(  \int_{-\infty}^\infty\int_{k_0-x_2}^{\infty}\frac{1}{|z|^{2+2s}}dz\right)dx.$$
Now using Lemma \ref{lem:dblint:id}, we get we have for some positive constant $C$ ,
\begin{equation*}
    \label{c_0}
    P \leq C \int_{ \mathcal{C}_0^{ k_0}} \frac{dx}{|k_0-x_2|^{2s}}= Ck_0^{1-2s}.
\end{equation*}
Therefore, $A_3 \leq C k^2k_0^{1-2s}.$
We finally have from  \eqref{a1}, \eqref{a2w} and from the previous inequality, that
\begin{align}\label{ll}
\mathbb{I}_1  \leq C k_0 k^{2-2s(P_0+1)} + C k^2k_0^{1-2s}.
\end{align}

\noindent \textbf{Step 4 (Final steps):} \, Note that
\begin{align}\label{de}
\int_{\mathcal{D}}\psi^2(x) \, dx = \sum_{j=0}^k \int_{\mathcal{C}_j}\psi^2(x) \, dx = k k_0.
\end{align}
 Therefore from \eqref{lp}, \eqref{ll}, \eqref{de} and using $1-2s\leq 2s P_0$ we have
\begin{align*}
P_{s,\mathcal{D}}(\psi) =\frac{\mathbb{I}_1 + \mathbb{I}_2 }{k k_0} \leq C\left(k^{1-2s(P_0+1)} +  kk_0^{-2s}+k^{-1}\sum_{m=0}^k s_{m}^{1-2s} + k^{-1}\sum_{m=0}^k s_{[\frac{{m}}{2}]}^{1-2s}\right).
\end{align*}
Now choose $k_0 = k^A$, where $2sA  > 1$ and the choice of $s_m$ such that $\sum_{m=0}^\infty s_{m}^{1-2s}< \infty$, then clearly the right hand side of the above expression tends to zero as $k\rightarrow \infty.$ This completes the proof of the theorem.
\qed

\section{On Sufficient Conditions} \label{sec:suff_cond}

In this section, we prove Theorem \ref{thm:sufficient:cond} and discuss on the sufficient conditions in details. Before  we present the proof,  let us first  recall the key identity by Loss- Sloane \cite{loss} which applies to prove the second part of Theorem \ref{thm:sufficient:cond}.

\begin{lemma}\emph{\textbf{[Loss-Sloane]}}\label{loss}
Let $\Omega \subset \mathbb{R}^n$,  then for $u \in C_c^\infty(\Omega)$,
$$I_{n, s, \Omega} [u] =\frac{1}{2} \int_{\omega \in S^{n-1}} \int_{\{x : x\cdot \omega = 0\}}\int_{x+ s\omega \in \Omega,\\  x+t\omega \in \Omega } \frac{(u(x+s\omega)- u(x+t\omega))^2}{|s-t|^{1+ 2s}}dtdsd\mu(x)d\omega$$
where $\mu$ denotes the $n-1$ dimensional Lebesgue measure on the plane $x \cdot \omega = 0.$
\end{lemma}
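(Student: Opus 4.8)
Here $I_{n,s,\Omega}[u]$ denotes the regional (restricted) seminorm $\int_\Omega\int_\Omega\frac{(u(x)-u(y))^2}{|x-y|^{n+2s}}\,dx\,dy$; since the integrand is nonnegative, all the manipulations below are legitimized by Tonelli's theorem, so the identity may be read in $[0,\infty]$ (for $u\in C_c^\infty(\Omega)$ and $s<1$ both sides are in fact finite, as $(u(x)-u(y))^2\lesssim|x-y|^2$ near the diagonal).

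The plan is to prove the identity by a change of variables that slices $\Omega\times\Omega$ along straight lines. Extend $u$ by zero and put $G(x,y):=\frac{(u(x)-u(y))^2}{|x-y|^{n+2s}}\mathbf 1_{\Omega}(x)\mathbf 1_{\Omega}(y)$, so that $I_{n,s,\Omega}[u]=\int_{\rn}\int_{\rn}G$. First I would substitute $w=x-y$ (Jacobian $1$) and then pass to polar coordinates $w=r\omega$ with $r>0$, $\omega\in S^{n-1}$, $dw=r^{n-1}\,dr\,d\omega$. For each fixed $\omega$, I would split the remaining integral over $\rn$ using the orthogonal decomposition $\rn=\omega^{\perp}\oplus\R\omega$: writing $x=z+t\omega$ with $z\cdot\omega=0$, one has $dx=dt\,d\mu(z)$ and $x-w=z+(t-r)\omega$, giving
$$I_{n,s,\Omega}[u]=\int_{S^{n-1}}\int_{\{z:\,z\cdot\omega=0\}}\int_{\R}\int_0^\infty G\big(z+t\omega,\,z+(t-r)\omega\big)\,r^{n-1}\,dr\,dt\,d\mu(z)\,d\omega .$$
Changing variables $r\mapsto\sigma:=t-r$ in the innermost integral (so $r=t-\sigma$, the range $r\in(0,\infty)$ becomes $\sigma\in(-\infty,t)$, and $r^{n-1}=(t-\sigma)^{n-1}=|t-\sigma|^{n-1}$) turns this into an integral of $G(z+t\omega,z+\sigma\omega)\,|t-\sigma|^{n-1}$ over $\{\sigma<t\}$, with $|x-y|=|t-\sigma|$ along the line.

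The final step is a symmetrization. The reflection $\omega\mapsto-\omega$ leaves $S^{n-1}$, the plane $\omega^{\perp}$ and $\mu$ invariant, and after relabelling $t\mapsto-t$, $\sigma\mapsto-\sigma$ it maps the region $\{\sigma<t\}$ onto $\{\sigma>t\}$ while fixing both $G(z+t\omega,z+\sigma\omega)$ and $|t-\sigma|^{n-1}$; hence the integral over $\{\sigma<t\}$ equals the integral over $\{\sigma>t\}$, and since $\{\sigma=t\}$ is null one may write $\int_{\R}\int_{\sigma<t}=\tfrac12\int_{\R}\int_{\R}$. Using $\frac{r^{n-1}}{|x-y|^{n+2s}}=\frac{|t-\sigma|^{n-1}}{|t-\sigma|^{n+2s}}=|t-\sigma|^{-1-2s}$ and renaming $z\mapsto x$, $\sigma\mapsto s$ in the line variables, together with the constraints $z+t\omega\in\Omega$, $z+\sigma\omega\in\Omega$, yields precisely the stated formula. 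The only genuinely delicate points are bookkeeping: verifying that the splitting $dx=dt\,d\mu(z)$ is measure-preserving (it is an orthonormal change of frame for each fixed $\omega$), keeping track of the limits and sign under $r\mapsto t-\sigma$, and checking the $\omega\mapsto-\omega$ symmetry that produces the factor $\tfrac12$ — all of which are harmless because $G\ge0$ makes every Fubini/Tonelli interchange automatic.
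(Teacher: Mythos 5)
The paper quotes this identity verbatim from Loss--Sloane \cite{loss} and offers no proof of its own, so there is nothing in the source to compare against; your argument is a correct, self-contained reconstruction of the standard proof in that reference (translate $y\mapsto x-w$, polar coordinates $w=r\omega$, orthogonal splitting $x=z+t\omega$ with $dx=dt\,d\mu(z)$, the shift $r\mapsto\sigma=t-r$, and the antipodal/relabelling symmetrization that supplies the factor $\tfrac12$). The cancellation $r^{n-1}/|x-y|^{n+2s}=|t-\sigma|^{-1-2s}$ along each line, the Tonelli justification via nonnegativity of $G$, and the reading of the indicator constraints $z+t\omega,\,z+\sigma\omega\in\Omega$ are all handled correctly.
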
\smallskip

\noindent \textbf{ Proof of Theorem \ref{thm:sufficient:cond}.} \ \textit{Part one:} We start with the right hand side of the inequality, for any $u \in C_c^{\infty}(\Omega)$
\begin{align*}
\int_{\rn\times \rn} \frac{\big(u(x)-u(y)\big)^2}{|x-y|^{n+2s}} \, dx \, dy \geq \int_{x\in \rn} \int_{y \in \Omega^c \cap B(x,R)} \frac{\big(u(x)-u(y)\big)^2}{|x-y|^{n+2s}} \, dx \, dy.
\end{align*}
As $u=0$ on $\Omega^c$ and $\frac{1}{|x-y|^{n+2s}}\geq \frac{1}{R^{n+2s}}$ for each $y \in \Omega^c \cap B(x,R)$, we finally have
\begin{align*}
 \int_{x\in \rn} \int_{y \in \Omega^c \cap B(x,R)} \frac{\big(u(x)-u(y)\big)^2}{|x-y|^{n+2s}} \, dx \, dy
 & \geq \frac{1}{R^{n+2s}} |\Omega^c \cap B(x,R)|\int_{\rn} u^2(x) \, dx\\ &\geq \frac{c}{R^{n+2s}} \int_{\Omega} u^2(x) \, dx.
 \end{align*}
Hence the result follows.  \smallskip

\textit{Second Part:} \  
Take $\Omega = \mathbb{R}^n$ in Lemma \ref{loss}.  Then
$$2I_{n,s, \mathbb{R}^n}[u]\geq  \int_{\omega \in \sigma} \int_{\{x : x\cdot \omega = 0\}} \left( \int_{s, t \in \mathbb{R}} \frac{(u(x+s\omega)- u(x+t\omega))^2}{|s-t|^{1+ 2s}}dtds\right)d\mu(x)d\omega.$$
Notice that since we have assumed LS(s) property on the domain, this implies  that for  each fixed $ \omega \in \omega, x \in P(\omega)$ there exist a constant $C >0$, independent of $\sigma$ and $P(\omega)$ such that
$$ \int_{s, t \in \mathbb{R}} \frac{(u(x+s\omega)- u(x+t\omega))^2}{|s-t|^{1+ 2s}}dtds \geq C \int_{\mathbb{R}} u^2(x+s\omega)ds.$$
Plugging  the above two inequalities  together  we obtain,
\begin{align*}
&I_{n,s, \mathbb{R}^n}[u]\geq  \frac{C}{2}\int_{\omega \in \sigma} \int_{\{x : x\cdot \omega = 0\}} \left( \int_{\mathbb{R}} u^2(x+s\omega)ds \right)d\mu(x)d\omega
 \\=&   \frac{C}{2}\int_{\omega \in \sigma} \left(\int_{\{x : x\cdot \omega = 0\}}  \int_{\mathbb{R}} u^2(x+s\omega)ds d\mu(x)\right)d\omega  =   \frac{C}{2}\int_{\omega \in \sigma} \int_{\mathbb{R}^n}u^2dx d\omega  = \frac{C|\sigma|}{2}\int_\Omega u^2dx.
 \end{align*}
 This finishes the proof of the theorem. \qed \bigskip
 
 As an application of Theorem \ref{thm:sufficient:cond}, we now present some examples of unbounded domains for which fractional Poincar\'e inequality is true:\bigskip
 
\textbf{Example 1: Finite union of infinite strips.} By infinite strip we mean the region contained in between two parallel hyperplanes. It is very easy to verify that the criteria in Theorem \ref{thm:sufficient:cond} (i) holds here. Therefore, fractional Poincare inequality hold for all $s\in (0,1)$. \smallskip

\textbf{Example 2: \ $\mathcal{D}$ as in Theorem \ref{thm:main}.} \
 For $s \in (\frac{1}{2},1)$ there is an \textit{easy geometric characterisation for  any domain $\Omega$ to satisfy LS$(s)$ condition}. A domain $\Omega$ satisfies LS($s$) condition  if and only if
$$\sup_{x_0\in \rn, \omega \in \sigma} BC(L_\Omega(x_0,\omega))  < \infty,$$
where  the sets $\left\{L_\Omega(x_0,\omega) \right\}_{x_0\in \rn, \omega \in \sigma}$ is as in Definition \ref{defn:LS}.
This follows as an immediate application of Lemma \ref{bfc} and Lemma \ref{ap}.  From this it clear that $\mathcal{D}$ for $s > \frac{1}{2}$
 satisfies LS($s$) condition and hence the FP($s$) inequality holds.\smallskip

\ \textbf{Example 3: Infinite number of parallel infinite strips.} If $\Omega$ is union of infinite number of parallel infinite strips, each one is of width $1$, with the property that distance between any two consecutive strips is bounded below by a strictly positive number. Then FP($s$) is true for all $s\in (0,1)$ both as an application of condition (i) and (ii) in Theorem \ref{thm:sufficient:cond}.  \smallskip
 
  \textbf{Example 4: Infinite Strips with decreasing width.} \   First let us consider the following one dimensional set.
 
 $$\Omega_1 =   (0,1) \cup (2, 2+\frac{1}{2})\cup (3, 3+\frac{1}{3}) \cup  (3+\frac{2}{3},  3+\frac{2}{3}+ \frac{1}{4}) \cup \cdots$$
 Basically length of $n$-th interval is $\frac{1}{n}$ and the distance between $n$ and $n+1$-th interval is also $\frac{1}{n}$.  Then define
 $$\Omega = \Omega_1 \cup \left( -\Omega_1 \right).$$ One can check that $P_{1,s}^2(\Omega) > 0$ as an application of condition (i) of Theorem \ref{thm:sufficient:cond}. Also  if we consider  $D = \Omega \times (-\infty, \infty) \subset \mathbb{R}^2$. Then, it is easy to check  that $P_{1,s}^2(D) > 0$ as an application of condition (i) of Theorem \ref{thm:sufficient:cond}. This example  serves as an example where the width of the strips ( of both the domain and its complement) goes to $0$.
 \smallskip
 
\textbf{Example 5: Concentric balls.}
The following domain satisfies the first criterion for all $s\in (0,1)$, but not a domain of LS(s) type.
$$
   \Omega:=\bigcup_{k=1}^{\infty}B_{2k}(0)\setminus B_{2k-1}(0).
$$
\smallskip
 
\textbf{Example 6: Domain with holes at $\mathbb{Z}\times \mathbb{Z}$ coordinates.} For $n,m \in \mathbb{Z}$, let $B_{r}((n,m))$ denotes the ball centered at $(n,m)$ and radius  $r$ for any $r>0$ small enough. It is easy to check that the following domain satisfies the condition (i) of Theorem \ref{thm:sufficient:cond}:
$$\displaystyle\Omega := \mathbb{R}^2\setminus \left(\bigcup_{n,m \in \mathbb{Z}} B_{r}((n,m)) \right).$$

\section{Proof of Theorem \ref{eigenvalue conv}}\label{final section}
 As mentioned in the introduction, the proof of the theorem for $k=1$ is done in \cite{type1}, but we will present some details of the proof of the sake of completeness. First, let us start with some  preliminary results that will be useful to prove Theorem \ref{eigenvalue conv}. For $\Omega \subset \rn$
\begin{equation*}\label{frac eigen valuef}
    \begin{cases}
       (-\Delta)^s u=\lambda(\Omega)u\;\;\text{ in }\Omega,\\
       u=0\;\;\;\;\text{ in }\Omega^c=\rn \setminus\Omega.
    \end{cases}
\end{equation*}
It is well known (see, \cite{sar_val})  that the set of eigenvalues for the above problem are discrete and tends to infinity. The first eigenvalue is simple and strictly positive. If $\lambda_k(\Omega)$ denotes the $k$-th eigenvalue and $u^k$ denotes the corresponding eigenfunction,  then
\begin{eqnarray}
\label{hev}
\lambda_k(\Omega) = \inf_{\substack{v\in H^s_{\Omega_{\ell}}(\rn)\setminus\{0\}\\ v\perp u^1, \cdots , u^{k-1}}}\frac{\int_{\rn}\int_{\rn}\frac{|v(x)-v(y)|^2}{|x-y|^{n+2s}}dxdy}{\int_{\Omega}v^2(x)\;dx}.
\end{eqnarray}
In the above expression $v\perp u^i$ means that $\int_{\Omega} vu^i = 0,$ for $i = 1, 2, \cdots , k-1.$ Also, for any $x \in \mathbb{R}^d$, we will use the following:
\begin{equation}\label{xcb}
    P_{n,s}^2(\Omega) =   P_{n,s}^2(x + \Omega).
\end{equation}

Now  we present the proof of the theorem \ref{eigenvalue conv}.
\bigskip

\noindent \textbf{ Proof of Theorem \ref{eigenvalue conv}.} First we consider the  case when $k=1$.   In \cite[Theorem 1.4]{type1}, it is established that  $P^2_{n-m,s}(\omega)=P^2_{n,s}(\mathbb{R}^m\times \omega).$ Now, the first part of the required inequality, that is,
$P^2_{n-m,s}(\omega)\leq P_{n,s}^2(\Omega_\ell)$
follows from the domain monotonicity property  of $P^2_{n,s}$ (If $\mathcal{D} \subset \Omega_2$, then $P^2_{n,s}(\Omega_2) \leq P^2_{n,s}(\mathcal{D})$).
The second part of the required inequality follows  following the similar argument as in the proof of Proposition 3.1 in \cite{type1}.

\smallskip

 Now we consider the case when $k=2$. We divide the domain $\omegal$ in the $x_1$ direction with equal Lebesgue measure as follows:
\begin{align*}
Q_{1,\ell} & =\bigg(-\ell,-\frac{\ell}{3}\bigg)\times (-\ell,\ell)^{m-1}\times\omega,\;Q_{12,\ell}=\bigg(-\frac{\ell}{3},\frac{\ell}{3}\bigg)\times (-\ell,\ell)^{m-1}\times\omega,\;\\
Q_{2,\ell} & =\bigg(\frac{\ell}{3},\ell\bigg)\times (-\ell,\ell)^{m-1}\times\omega.
\end{align*}
We denote by $\lambda_{1}(Q_{i,\ell})$  the first eigenvalue of the problem  \eqref{hev}, where $\Omega$ is replaced by $Q_{i,\ell},\;i=1,2$ and $v_{i,\ell}$ is the corresponding normalized first eigenfunctions respectively.  Since $\Omega_{\frac{\ell}{3}}\subset Q_{i,\ell}\subset\omegal$(we have identified  $\Omega_{\frac{\ell}{3}}$ with its appropriate translate), then it holds by using \eqref{xcb} that
\begin{equation}\label{ccv}
P^2_{n-m,s}(\omega) \leq\lambda_{1}(\omegal)\leq \lambda_{1}(Q_{i,\ell})\leq\lambda_{1}(\Omega_{\frac{\ell}{3}})\leq P^2_{n-m,s}(\omega) +\frac{3^s\;C}{\ell^s}.
\end{equation}
In the last step we have used  Theorem 1.2 of \cite{type1} where the case $k=1$ is considered. \smallskip

Define the function $$\psi_\ell := c_1 v_{1,\ell} + c_2v_{2,\ell}$$ where $c_1, c_2 \in \mathbb{R}$.
 We can choose both $c_1,\; c_2$ to be non zero,  such that  
 \begin{align} \label{perp_eigenfn}
 \int_{\Omega_\ell} \psi_\ell u_\ell = c_1\int_{\Omega_\ell} v_{1,\ell} u_\ell +c_2 \int_{\Omega_\ell} v_{2,\ell} u_\ell  = 0,
 \end{align}
 where $u_\ell$ denotes the first eigenfunction of the problem \eqref{frac eigen value}.   Now we calculate the fractional semi norm of the function $\psi_\ell$,
 \begin{align*}
     & \int_{\rn}\int_{\rn}\frac{|\psi_\ell(x)-\psi_\ell(y)|^2}{|x-y|^{n+2s}}dxdy  \\
     = & \int_{\rn}\int_{\rn}\frac{|c_1\;(v_{1,\ell}(x)-v_{1,\ell}(y))+c_2\;(v_{2,\ell}(x)-v_{2,\ell}(y))|^2}{|x-y|^{n+2s}}\;dxdy.
 \end{align*}
 With out any loss of generality we can assume the $u_\ell$ is the normalized eigenfunction, that is
 \begin{equation}
     \label{normali}
     \int_{\Omega_\ell} u_\ell^2 =1.
 \end{equation}
 Notice that $v_{1, \ell}$ and $v_{2,\ell}$ has disjoint supports, therefore we have
 \begin{align*}
       \int_{\rn}\int_{\rn}\frac{|\psi_\ell(x)-\psi_\ell(y)|^2}{|x-y|^{n+2s}}dxdy &=c_1^2\lambda_1(Q_{1,\ell}) +c_2^2\lambda_1(Q_{2,\ell}) \\ &+ 2c_1c_2\int_{\Omega_\ell}\int_{\Omega_\ell}\frac{(v_{1,\ell}(x)-v_{1,\ell}(y))(v_{2,\ell}(x)-v_{2,\ell}(y))}{|x-y|^{n+2s}}\;dxdy.
 \end{align*}
 Using \eqref{xcb}, we obtain $\lambda_1(Q_{1,\ell}) =\lambda_1(Q_{2,\ell})$. We can further simplify  the second integral above  to get
  \begin{align*}
       \int_{\rn}\int_{\rn}\frac{|\psi_\ell(x)-\psi_\ell(y)|^2}{|x-y|^{n+2s}}dxdy = \left( c_1^2 +c_2^2\right) \lambda_1(Q_{1,\ell}) - 2c_1c_2\int_{Q_{2,\ell}}\int_{Q_{1,\ell}}\frac{v_{1,\ell}(x)v_{2,\ell}(y)}{|x-y|^{n+2s}}\;dxdy.
 \end{align*}
 Using Young's inequality,
  \begin{align}\label{kdl}
       & \int_{\rn}\int_{\rn}\frac{|\psi_\ell(x)-\psi_\ell(y)|^2}{|x-y|^{n+2s}}dxdy \leq \left( c_1^2 +c_2^2\right) \lambda_1(Q_{1,\ell})\nonumber \\
       & \hspace{3cm}+c_1^2\int_{Q_{2,\ell}}\int_{Q_{1,\ell}}\frac{|v_{1,\ell}(x)|^2}{|x-y|^{n+2s}}\;dxdy+ c_2^2\int_{Q_{2,\ell}}\int_{Q_{1,\ell}}\frac{|v_{2,\ell}(y)|^2}{|x-y|^{n+2s}}\;dxdy.
       \end{align}
 We will only present the estimate for the term $\int_{Q_{2,\ell}}\int_{Q_{1,\ell}}\frac{|v_{1,\ell}(x)|^2}{|x-y|^{n+2s}}\;dxdy$. The estimate for the other integral follows similarly.  Using $|x-y| \geq \frac{2\ell}{3}$ for $x \in Q_{1,\ell}$ and $y \in Q_{2,\ell}$ and \eqref{normali} we derive that
 \begin{equation}\label{estima}
 \displaystyle \left|\int_{Q_{2,\ell}}\int_{Q_{1,\ell}}\frac{|v_{1,\ell}(x)|^2}{|x-y|^{n+2s}}\;dxdy \right| \leq \frac{C}{\ell^{n+2s}}|Q_{2,\ell}| = \frac{C}{\ell^{n-m+2s}}.
 \end{equation}
 Therefore from \eqref{kdl} and \eqref{estima}, we get
 \begin{equation}
   \label{estimm}
       \int_{\rn}\int_{\rn}\frac{|\psi_\ell(x)-\psi_\ell(y)|^2}{|x-y|^{n+2s}}dxdy
      \leq \left( c_1^2 +c_2^2\right) \lambda_1(Q_{1,\ell}) + \frac{C}{\ell^{n-m+2s}}.
 \end{equation}
 Now we use \eqref{normali}  to get  
 \begin{equation}\label{estimmma}
 \displaystyle\int_{\omegal}\psi_{\ell}^2(x)\;dx=c_1^2\displaystyle\int_{Q_{1,\ell}}v_{1,\ell}^2(x)\;dx+c_2^2\displaystyle\int_{Q_{2,\ell}}v_{2,\ell}^2(x)\;dx=c_1^2+c_2^2.
 \end{equation}
By the identity \eqref{hev} and noting the fact \eqref{perp_eigenfn}, we find
\begin{align*}
    \lambda_2(\Omega_{\ell}) \leq \frac{\int_{\rn}\int_{\rn} \frac{|\psi_{\ell}(x)-\Psi_{\ell}(y)|^2 }{|x-y|^{n+2s}} dx dy}{\int_{\rn} \psi_{\ell}^2(x) dx}.
\end{align*}
 Therefore from \eqref{ccv}, \eqref{estimm} and \eqref{estimmma}, we have
 \begin{equation*}
     \lambda_2(\Omega_\ell) \leq  \lambda_1(Q_{1,\ell}) + \frac{C}{\ell^{n-m+2s}} \leq P_{n-m,s}^2(\omega) +\frac{C}{\ell^{s}}+ \frac{C}{\ell^{n-m+2s}}.
 \end{equation*}
 The result then follows after using $\lambda_1(\omegal) < \lambda_2(\omegal)$ and \eqref{ccv}.\smallskip

For the case of general $k$, we have to split  the domain $\omegal$ into $2k-1$ subdomains in $x_1$ direction with equal Lebesgue measure and proceed similarly as done above. \qed \bigskip

\textbf{Acknowledgement:}  \  The first author was supported by the ERCIM
``Alain Bensoussan" Fellowship programme at NTNU, Norway. Parts of this work  is carried when the first author was visiting IIT, Kanpur.  Research work of second author is funded by Matrix grant  (MTR/2019/000585)   and Inspire grant  (IFA14-MA43) of Department of Science and Technology (DST). We would like to thank Prof. B. Dyda for useful comments and suggestions.


\begin{thebibliography}{200}
\small{

\bibitem{allaire} G. Allaire and A. Piatnitski,  On the asymptotic behaviour of the kernel of an adjoint convection-diffusion operator in a long cylinder. \textit{ Rev. Mat. Iberoam.} 33 (2017), no. 4, 1123--1148.


\bibitem{musina} V. Ambrosio, L. Freddi and R. Musina, Asymptotic analysis of the Dirichlet fractional Laplacian in domains becoming unbounded. \emph{J. Math. Anal. Appl.} 485 (2020), no. 2, 123845, 17 pp.

\bibitem{brasco-Lindgren-Parini} L. Brasco, E. Lindgren and E. Parini, The fractional Cheeger problem. \emph{Interfaces Free Bound.} 16 (2014), no. 3, 419–458.

\bibitem{brasco} L. Brasco and A. Salort, A note on homogeneous  Sobolev space  of fractional order.  \emph{Ann. Mat. Pura Appl.} (4) 198 (2019), no. 4, 1295–1330.


\bibitem{Bucur16} C. Bucur and E. Valdinoci, Nonlocal Diffusion and Applications. \textit{Lecture Notes of the Unione Matematica Italiana,
20. Springer, [Cham]; Unione Matematica Italiana, Bologna,} 2016.


\bibitem{chipot1} M. Chipot, Asymptotic issues for some partial differential equations. \textit{Imperial College Press}, London, (2016).


\bibitem{delpino} M. Chipot, J. Davila and M. Del pino,  On the behavior of positive solutions of semilinear elliptic equations in asymptotically cylindrical domains. \textit{J. Fixed Point Theory Appl.} 19 (2017), no. 1, 205--213.

\bibitem{cis} M. Chipot, A. Mojsic and  P. Roy,   On some variational problems set on domains tending to infinity. \textit{Discrete Contin. Dyn. Syst.} 36 (2016), no. 7, 3603--3621.


 

\bibitem{crs} M. Chipot,  P. Roy and I.  Shafrir, Asymptotics of eigenstates of elliptic problems with mixed boundary data on domains tending to infinity. \textit{Asymptot. Anal. }85 (2013), no. 3--4, 199--227.

\bibitem{arn} M. Chipot and A. Rougirel, On the asymptotic behaviour of the eigenmodes for elliptic problems in domains becoming unbounded.\textit{ Trans. Amer. Math. Soc.} 360 (2008), no. 7, 3579--3602.

\bibitem{zube}M. Chipot and S. Zube,  On the asymptotic behaviour of the pure Neumann problem in cylinder like domains and its applications. \textit{Asymptot. Anal.} 108 (2018), no. 3, 163--185.

\bibitem{pi} I. Chowdhury and P. Roy, On the asymptotic analysis of problems involving fractional Laplacian in cylindrical domains tending to infinity. \textit{Commun. Contemp. Math.,} 19 (2017), no. 5, 21 pp.



\bibitem{type1} G. Csato, I. Chowdhury, P. Roy and  S. K. Firoz,  Study of fractional Poincar\'e inequalities on unbounded domains. \emph{Discrete Contin. Dyn. Syst.} 41 (2021), no. 6, 2993–3020.

\bibitem{donato} P. Donato, S. Mardare and B. Vernescu,  Bingham flows in periodic domains of infinite length. \textit{Chin. Ann. Math. Ser. B }39 (2018), no. 2, 183--200.

\bibitem{lk} B. Dyda, A fractional order Hardy inequality. \textit{Illinois J. Math.,} 48(2004), no. 2, 575--588.


\bibitem{dy}    B. Dyda and R. L. Frank, Fractional Hardy--Sobolev--Maz'ya inequality for domains. \textit{Studia Math.,} 208(2012), no. 2, 151--166 .

\bibitem{dy-k}    B. Dyda and T. Kulczycki, Spectral gap for stable process on convex planar double symmetric domains. \textit{Potential Anal.,} 27(2007), no.101–132.

\bibitem{vaha2}  B. Dyda, J. Lehrb\"ack, and A. V. V\"ah\"akangas,  Fractional Hardy-Sobolev type inequalities for half spaces and John domains. \textit{Proc. Amer. Math. Soc.} 146 (2018), no. 8, 3393--3402.

\bibitem{kass} M. Felsinger, M. Kassmann and P. Voigt, The Dirichlet problem for nonlocal operators. \textit{Mathematische Zeitschrift,} 279(2015), no. 3--4, 779--809.


\bibitem{valdi} A. Fiscella, R. Servadei and  E. Valdinoci, Density properties for fractional Sobolev spaces. \textit{Ann. Acad. Sci. Fenn. Math.,} 40 (2015), no. 1, 235--253.

\bibitem{fr}R. L. Frank,  Eigenvalue Bounds for the Fractional Laplacian: A Review. \textit{Recent developments in nonlocal theory,  De Gruyter, Berlin}, (2018), 210--235.


\bibitem{rupert} R. L. Frank, T. Jin and  J. Xiong,  Minimizers for the fractional Sobolev inequality on domains.  \textit{Calc. Var. Partial Differential Equations,} 57 (2018), no. 2, Art. 43, 31.



\bibitem{gues}  S. Guesmia, Some results on the asymptotic behavior for hyperbolic problems in cylindrical domains becoming unbounded.\textit{ J. Math. Anal. Appl.} 341 (2008), no. 2, 1190--1212.

\bibitem{loss} M. Loss and C. Sloane, Hardy inequalities for fractional integrals on general domains. \textit{J. Funct. Anal.,} 259 (2010), no. 6, 1369--1379.


\bibitem{Sandeep-Man} G. Mancini  and K. Sandeep, Moser-Trudinger inequality on conformal discs. \textit{Commun. Contemp. Math., } 12 (2010), no. 6, 1055--1068.

\bibitem{maz} V. G. Maz'ja, Sobolev Spaces. \textit{Springer Ser. Soviet Math.}, Springer, Berlin, 1985.


\bibitem{ds} E. D. Nezza, G. Palatucci and E. Valdinoci,  Hitchhiker's guide to the fractional Sobolev spaces. \textit{ Bull. Sci. Math. }136 (2012), no. 5, 521--573.


\bibitem{sar} R. Servadei and E. Valdinoci, The Brezis--Nirenberg result for the fractional Laplacian. \textit{Trans. Am. Math. Soc.,} 367(2015), no. 1, 67--102.



\bibitem{sar_val} R. Servadei and E. Valdinoci, Variational methods for non-local operators of elliptic type.
\textit{Discrete Contin. Dyn. Syst,} 33 (2013), no. 5, 2105--2137.


\bibitem{karen} K. Yeressian, Asymptotic behavior of elliptic nonlocal equations set in cylinders, \textit{Asymptot. Anal.,} 89(2014), no. 1--2, 21--35.


}\end{thebibliography}
\end{document}